\def\subsection{\@startsection{subsection}{2}%
	\z@{.5\linespacing\@plus.7\linespacing}{.1\linespacing}%
	{\normalfont\bfseries}}
\newtheorem{theorem}{Theorem}[section]
\newtheorem{definition}[theorem]{Definition}
\newtheorem{corollary}[theorem]{Corollary}
\newtheorem{proposition}[theorem]{Proposition}
\theoremstyle{definition}
\newcommand{\cc}{\mathbb{C}}
\newcommand{\ddb}{\partial\bar{\partial}}
\newcommand{\db}{{\bar{\partial}}}
\newcommand{\dd}{\partial}
\newcommand{\cinf}{C^\infty}
\newcommand{\xt}{X_t}
\newcommand{\h}[2][DR]{H^{#2}_{#1}}
\title{Strongly Gauduchon Hyperbolicity and two other Types of Hyperbolicity}
\author{Yi Ma}
\begin{document}
	
	\maketitle
	
	\begin{abstract}
This paper proposes sG-hyperbolicity as a new tool for studying hyperbolicity on complex manifolds. It demonstrates that this notion leads to a wider class of divisorially hyperbolic manifolds compared to balanced hyperbolicity. We also introduce weakly p-Kähler hyperbolic structures and pluriclosed star split hyperbolic metrics as possible new avenues for exploration.
	\end{abstract}
	
	\section{Introduction}

	Hyperbolicity is an important concept in the theory of complex manifolds, characterizing their geometric and topological properties. In recent years, the study of hyperbolicity has attracted widespread attention in the fields of complex analytic, algebraic and differential geometries, and has achieved a series of important results. Classical notions such as Kähler hyperbolicity, Kobayashi hyperbolicity, and Brody hyperbolicity have been intensively studied. Meanwhile, new notions such as balanced hyperbolicity and divisorial hyperbolicity have been introduced and studied, providing new perspectives and tools for the study of complex manifolds.
	
	Let us first recall some notions of hyperbolicity.
	
	Let $X$ be a compact complex manifold with $\dim_\mathbb{C}X\ge 2$.
	\begin{enumerate}
		\item A form $\alpha$ on $(X, \omega)$ is said to be $\tilde{d}$(bounded) if the lift $\tilde{\alpha}$ of $\alpha$ to the universal cover $\tilde{X}$ od $X$ is $d$-exact with a $d$-potential bounded with respect to the lift $\tilde{\omega}$ of $\omega$.
		\item(\cite{gromov1991kahler}) X is said to be Kähler hyperbolic if $X$ admits a Kähler metric whose fundamental form  $\omega$ is $\tilde{d}$(bounded).

		\item(\cite{kobayashi1967invariant}) X is said to be Kobayashi hyperbolic if the Kobayashi pseudo-distance on X is a distance.
		\item(\cite{brody1978compact}) X is said to be Brody hyperbolic if all holomorphic maps $f: \mathbb{C} \to X$ are constant.
		\item(\cite{dan21}) X is said to be balanced hyperbolic if there is a balanced metric $\omega$ on $X$ such that $\omega^{n-1}$ is $\tilde{d}$(bounded) with respect to $\omega$.
		\item(\cite{dan21}) X is said to be divisorially hyperbolic if there is no holomorphic map $f: \mathbb{C}^{n-1} \to X$ such that $f$ is non-degenerate at some point and has subexponential growth in the sense of Definition \ref{div}.
	\end{enumerate}
	
	As for the relations among these hyperbolicities, it is known that a compact complex manifold is Kobayashi hyperbolic if and only if it is Brody hyperbolic. Besides, for a compact complex manifold, we have the following implications:
	
	\[
	\begin{tikzcd}
  & X \text{ is Kähler hyperbolic} \arrow[d, Rightarrow] \arrow[r, Rightarrow]
& X \text{ is Kobayashi/Brody hyperbolic} \arrow[d, Rightarrow] \\
& X \text{ is balanced hyperbolic} \arrow[r, Rightarrow]
& X \text{ is divisorially hyperbolic}
	\end{tikzcd}
	\]
	
This paper aims to further extend the research on hyperbolicity concepts. After reviewing existing notions of hyperbolicity and their mutual relationships, we introduce the notion of {\bf sG-hyperbolicity} and investigate its connection to divisorial hyperbolicity. A key property of sG-hyperbolicity is its deformation openness --- see Theorem \ref{defopen}. This property is known to hold for Kobayashi hyperbolic compact complex manifolds \cite{brody1978compact}, but it is still an open question whether it holds for K\"ahler hyperbolic, balanced hyperbolic and divisorially hyperbolic compact complex manifolds. We then construct examples of sG-hyperbolic manifolds that are not necessarily balanced hyperbolic. Finally, we propose new hyperbolicity notions, namely {\bf weakly p-Kähler hyperbolicity}, {\bf pluriclosed star split hyperbolicity}, and the relationship with divisorial hyperbolicity, laying the groundwork for further research.

\section{sG-Hyperbolic Manifolds}
\subsection{Definition and Properties}
Recall the definition of sG manifolds:
\begin{definition}[{\cite{popovici2013deformation}}]
	Let \( X \) be a compact complex manifold with \( \dim_{\mathbb{C}}X = n \).
	\begin{enumerate}
		\item A \( C^\infty \) positive definite \( (1,1) \)-form \( \omega \) on \( X \) is said to be a strongly Gauduchon (sG) metric if $\omega^{n-1}$ is the $(n-1,n-1)$-component of a real $d$-closed $\cinf$ $(2n-2)$-form $\Omega$.
		\item If \( X \) carries such a metric, \( X \) is said to be a strongly Gauduchon (sG) manifold.
	\end{enumerate}
\end{definition}

The first notion we introduce in this paper is contained in the
\begin{definition}
	Let $X$ be a compact complex manifold with $\dim_{\mathbb{C}}X=n$. A Hermitian metric $\omega$ on $X$ is said to be {\bf sG-hyperbolic} ({\bf strongly Gauduchon hyperbolic}) if there exists a real $d$-closed $(2n-2)$-form $\Omega$ on $X$ such that the $(n-1,n-1)$-component of $\Omega$ is $\omega_{n-1}:=\frac{\omega^{n-1}}{(n-1)!}$ and $\Omega$ is $\tilde{d}$(bounded) with respect to $\omega$.
	
	The manifold X is said to be {\bf sG-hyperbolic} if it carries an sG-hyperbolic metric.
\end{definition}
%

The first property we observe for these manifolds is given in
\begin{proposition}\label{product}
	The Cartesian product of sG-hyperbolic manifolds is sG-hyperbolic.
\end{proposition}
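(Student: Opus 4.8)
The plan is to reduce to the case of two factors by induction and then build the required form explicitly from the data on each factor. Let $X_1, X_2$ be sG-hyperbolic with $\dim_{\mathbb{C}}X_i = n_i$, metrics $\omega_i$, and associated real $d$-closed $(2n_i-2)$-forms $\Omega_i$ whose lifts admit bounded $d$-potentials. Write $n = n_1 + n_2$, let $\pi_i : X_1\times X_2 \to X_i$ be the projections, and equip $X = X_1\times X_2$ with the product metric $\omega = \pi_1^*\omega_1 + \pi_2^*\omega_2$. First I would expand $\omega^{n-1}$ by the binomial theorem; since $\omega_i^k = 0$ on $X_i$ for $k > n_i$, only the terms with $(\pi_1^*\omega_1)^{n_1-1}$ and $(\pi_1^*\omega_1)^{n_1}$ survive, so $\omega^{n-1}$ is a combination of $\pi_1^*(\omega_1^{n_1-1})\wedge\pi_2^*(\omega_2^{n_2})$ and $\pi_1^*(\omega_1^{n_1})\wedge\pi_2^*(\omega_2^{n_2-1})$. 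This motivates the candidate
\[
\Omega = \frac{1}{n_2!}\,\pi_1^*\Omega_1 \wedge \pi_2^*(\omega_2^{n_2}) + \frac{1}{n_1!}\,\pi_1^*(\omega_1^{n_1}) \wedge \pi_2^*\Omega_2 ,
\]
a real $(2n-2)$-form on $X$.

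Next I would check that $\Omega$ is $d$-closed and has the correct component. Closedness holds because $d\Omega_i = 0$ by hypothesis and because $\omega_i^{n_i}$ is top-degree on $X_i$, hence $d$-closed; thus every factor appearing in each wedge is closed. To extract the $(n-1,n-1)$-component I would use that $\omega_i^{n_i}$ is of pure type $(n_i,n_i)$ and that the $(n_i-1,n_i-1)$-component of $\Omega_i$ is $\omega_i^{n_i-1}/(n_i-1)!$; a short computation with binomial coefficients then shows this component equals exactly $\omega^{n-1}/(n-1)!$, and in fact this is what pins down the normalizing constants $1/n_2!$ and $1/n_1!$. Reality of $\Omega$ is immediate from reality of the $\Omega_i$ and $\omega_i$.

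The heart of the argument is $\tilde{d}$(boundedness). Here I would use that the universal cover of the product is the product of the universal covers, $\tilde X = \tilde X_1\times\tilde X_2$, carrying the lifted product metric $\tilde\omega = \tilde\pi_1^*\tilde\omega_1 + \tilde\pi_2^*\tilde\omega_2$, and that the lift of $\Omega$ has the same expression with all objects replaced by their lifts. Letting $\Gamma_i$ be a $d$-potential of $\tilde\Omega_i$ bounded with respect to $\tilde\omega_i$, I would set
\[
\Gamma = \frac{1}{n_2!}\,\tilde\pi_1^*\Gamma_1 \wedge \tilde\pi_2^*(\tilde\omega_2^{n_2}) + \frac{1}{n_1!}\,\tilde\pi_1^*(\tilde\omega_1^{n_1}) \wedge \tilde\pi_2^*\Gamma_2 .
\]
Differentiating, and again using that the $\tilde\omega_i^{n_i}$ are closed, yields $d\Gamma = \tilde\Omega$. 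It then remains to bound $\Gamma$ pointwise: since a product metric induces an orthogonal splitting of the tangent bundle, the norm of a wedge of pulled-back forms factors, $|\tilde\pi_1^*\alpha \wedge \tilde\pi_2^*\beta|_{\tilde\omega} = |\alpha|_{\tilde\omega_1}\,|\beta|_{\tilde\omega_2}$; as $|\tilde\omega_i^{n_i}|_{\tilde\omega_i}$ is the constant $n_i!$ and each $|\Gamma_i|_{\tilde\omega_i}$ is bounded, both summands of $\Gamma$ have bounded norm.

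The main obstacle I anticipate is precisely this last norm estimate, namely ensuring that no unbounded factor enters when passing to the product metric. This hinges on two points: that the two factors are $\tilde\omega$-orthogonal, so the wedge norm multiplies cleanly, and that the top power $\tilde\omega_i^{n_i}$ has constant norm (being a constant multiple of the volume form). Once the correct form $\Omega$ is identified, the remaining verifications — closedness, reality, and the bidegree and coefficient bookkeeping — are routine, and the two-factor case extends to arbitrarily many factors by induction.
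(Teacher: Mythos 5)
Your proposal is correct and follows essentially the same route as the paper: form the product metric, take $\Omega$ to be the corresponding combination of $\pi_1^*\Omega_1\wedge\pi_2^*(\omega_2^{n_2})$ and $\pi_1^*(\omega_1^{n_1})\wedge\pi_2^*\Omega_2$, and pull back the bounded potentials $\Gamma_i$ to get a bounded potential on the product of the universal covers. Your treatment of the normalizing constants and of the norm estimate for the wedge of pulled-back forms is in fact slightly more explicit than the paper's, but the underlying argument is identical.
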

\begin{proof}
	Let $(X_1,\omega_1)$ and $(X_2,\omega_2)$ be sG-hyperbolic manifolds of respective dimensions $m$ and $n$, and let $\pi_1:\widetilde{X_1}\rightarrow X_1$ and $\pi_1:\widetilde{X_1}\rightarrow X_1$ be their universal covers. $\omega_1^{m-1}$(resp. $\omega_2^{n-1}$) is the $(m-1,m-1)$ (resp.$(n-1,n-1)$ ) component of $d$-closed real form $\Gamma_1$ (resp. $\Gamma_2$).
	
	We denote by $\omega=\sigma^*_1\omega_1+\sigma^*_2\omega_2$ the induced product metric on $X$. We have that $\omega^{n+m-1}=\tbinom{n+m-1}{m-1}\sigma^*_1\omega_1^{m-1}\wedge\sigma^*_2\omega_2^n+\tbinom{n+m-1}{n-1}\sigma^*_1\omega_1^{m}\wedge\sigma^*_2\omega_2^{n-1}$ is the $(n+m-1,n+m-1)$-component of $$\Gamma=\tbinom{n+m-1}{m-1}\sigma^*_1\Gamma_1\wedge\sigma^*_2\omega_2^n+\tbinom{n+m-1}{n-1}\sigma^*_1\omega_1^{m}\wedge\sigma^*_2\Gamma_2,$$
	which is a $d$-closed real $(n+m-1)$-form. Therefore $\omega$ is a strongly Gauduchon metric.
	
	
	Besides, we know that
	\begin{align*}
		\pi^*\Gamma&=\tbinom{n+m-1}{m-1}\pi^*\sigma^*_1\Gamma_1\wedge\pi^*\sigma^*_2\omega_2^n+\tbinom{n+m-1}{n-1}\pi^*\sigma^*_1\omega_1^{m}\wedge\pi^*\sigma^*_2\Gamma_2\\
		&=\tbinom{n+m-1}{m-1}\tilde\sigma^*_1(\pi_1^*\Gamma_1)\wedge\sigma^*_2\omega_2^n+\tbinom{n+m-1}{n-1}\sigma^*_1\omega_1^{m}\wedge\tilde\sigma^*_2(\pi_2^*\Gamma_2)\\&=d[\tbinom{n+m-1}{m-1}\tilde\sigma^*_1\Theta_1\wedge\sigma^*_2\omega_2^n+\tbinom{n+m-1}{n-1}\sigma^*_1\omega_1^{m}\wedge\tilde\sigma^*_2\Theta_2].
	\end{align*}
	Hence $\Gamma$ is $\tilde{d}$(bounded) on $X_1\times X_2$, i.e. $X_1\times X_2$ is sG-hyperbolic.
\end{proof}
We have the deformation openness of the sG-hyperbolicity.
\begin{theorem}\label{defopen}
	Let \(\pi: X \rightarrow B\) be a holomorphic family of compact complex manifolds \(X_t := \pi^{-1}(t)\), with \(t \in B\). Fix an arbitrary reference point \(0 \in B\). If the fibre \(X_0\) is an sG-hyperbolic manifold, then, for all \(t \in B\) sufficiently close to 0, the fibre \(X_t\) is again an sG-hyperbolic manifold.
	
\end{theorem}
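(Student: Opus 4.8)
The plan is to keep the reference $(2n-2)$-form fixed and let only the metric vary, exploiting the fact that --- unlike the balanced condition --- the sG condition imposes no closedness on $\omega^{n-1}$ itself, but only on an ambient closed form. First I would invoke Ehresmann's theorem to trivialise the family smoothly over a small ball around $0$, so that all fibres share one underlying compact $C^\infty$ manifold $M$ with $X_t=(M,J_t)$, the complex structures $J_t$ depending smoothly (indeed holomorphically) on $t$ and satisfying $J_t\to J_0$. The point is that the exterior derivative $d$, the fundamental group $\Gamma=\pi_1(M)$, the universal cover $\widetilde{M}$, the deck action, and the lift of any fixed real form are all independent of $t$; only the bidegree decomposition and the Hermitian norms change with $t$.

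Let $\omega_0$ be the given sG-hyperbolic metric on $X_0$ and $\Omega_0$ the associated real $d$-closed $(2n-2)$-form, so that the $J_0$-component $(\Omega_0)^{(n-1,n-1)}=(\omega_0)_{n-1}>0$ and $\pi^*\Omega_0=d\Theta_0$ on $\widetilde{M}$ with $\Theta_0$ bounded with respect to $\widetilde{\omega_0}$. I would simply set $\Omega_t:=\Omega_0$ for all $t$. Its $J_t$-component $\gamma_t:=\Pi^{(n-1,n-1)}_{J_t}(\Omega_0)$ is a real $(n-1,n-1)$-form depending continuously on $t$ (the bidegree projector is a smooth function of $J_t$), with $\gamma_0=(\omega_0)_{n-1}$. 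Since $M$ is compact, strict positivity is open with uniform bounds, so $\gamma_t>0$ for $t$ near $0$. By Michelsohn's correspondence on $(M,J_t)$ between strictly positive $(n-1,n-1)$-forms and Hermitian metrics, there is a unique $J_t$-Hermitian metric $\omega_t$ with $(\omega_t)_{n-1}=\gamma_t$, depending continuously on $t$ with $\omega_t\to\omega_0$. Thus $\Omega_0$ is a real $d$-closed $(2n-2)$-form whose $J_t$-$(n-1,n-1)$-component is $(\omega_t)_{n-1}$, which already establishes the sG part of the statement for $X_t$.

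It remains to check that $\Omega_0$ is still $\tilde{d}$(bounded), now with respect to $\omega_t$. The equation $\pi^*\Omega_0=d\Theta_0$ persists verbatim, so I only need $\Theta_0$ to be bounded with respect to $\widetilde{\omega_t}$. Because $\omega_t\to\omega_0$ uniformly on the compact manifold $M$, there is a constant $C_t\to 1$ with $C_t^{-1}\omega_0\le\omega_t\le C_t\omega_0$ pointwise on $M$; as this inequality is deck-invariant it lifts to $C_t^{-1}\widetilde{\omega_0}\le\widetilde{\omega_t}\le C_t\widetilde{\omega_0}$ everywhere on $\widetilde{M}$. Hence the norms $|\cdot|_{\widetilde{\omega_t}}$ and $|\cdot|_{\widetilde{\omega_0}}$ are uniformly equivalent on $\widetilde{M}$, and $\sup_{\widetilde{M}}|\Theta_0|_{\widetilde{\omega_0}}<\infty$ forces $\sup_{\widetilde{M}}|\Theta_0|_{\widetilde{\omega_t}}<\infty$. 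Therefore $\Omega_0$ is $\tilde{d}$(bounded) with respect to $\omega_t$, and $(X_t,\omega_t)$ is sG-hyperbolic.

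The step I expect to require the most care is this last one --- the transfer of boundedness to the universal cover --- since boundedness is a non-compact, quantitative statement; the argument goes through only because the comparison of $\omega_t$ and $\omega_0$ is uniform on the compact base $M$ and therefore lifts with the \emph{same} constant to $\widetilde{M}$. Conceptually, the entire proof hinges on being able to freeze $\Omega_0$: this is precisely the flexibility the sG notion enjoys and the balanced notion lacks (a balanced metric would require $d(\omega_t^{n-1})=0$, which cannot in general be arranged by perturbation), which is why the analogous deformation-openness question for balanced hyperbolicity remains open.
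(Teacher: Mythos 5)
Your proposal is correct and follows essentially the same route as the paper's proof: freeze the $d$-closed form $\Omega_0$, take its $(n-1,n-1)$-component with respect to $J_t$ (positive definite for small $t$ by compactness), apply Michelsohn's correspondence to obtain $\omega_t$, and use the uniform equivalence of $\omega_t$ and $\omega_0$ on the compact underlying manifold to transfer $\tilde d$(boundedness) to the cover. Your write-up merely spells out more carefully the final boundedness-transfer step, which the paper dispatches in one sentence.
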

\begin{proof}
	
	Let \(\omega_0\) be an sG-hyperbolic metric on \(X_0\). By the definition of sG-hyperbolic metric, there exists a \(d\)-closed \(\tilde{d}\text{(bounded)}\) \((2n-2)\)-form \(\Omega\) such that \(\omega_0^{n-1}\) is the \((n-1,n-1)\)-component of \(\Omega\) on \(X_0\). 
	
	The \((n-1,n-1)\)-component \(\Omega^{n-1,n-1}\) of $\Omega$ with respect to the complex structure of \(X_t\) is positive definite for \(t \in B\) sufficiently close to \(0\). By Lemma (\cite{10.1007/BF02392356}, (4.8)), there exists a metric \(\omega_t\) such that \(\omega_t^{n-1} = \Omega_t^{n-1, n-1}\). Because of the compactness of the $\cinf$ manofold $X$ underlying the fibres $\xt$ and the continuity of $\omega_t$ with respect to $t$, $\Omega$ is $\tilde{d}$(bounded) with respect to $\omega_t$ for \(t \in B\) sufficiently close to \(0\). Hence \(X_t\) is again an sG-hyperbolic manifold.\end{proof}

Let us now finish recalling the definition of a divisorially hyperbolic manifold by recalling the definition of subexponential growth for entire holomorphic maps $f:\cc^p\rightarrow (X,\omega)$. We denote the open ball (resp. sphere) of radius $r$ centered at 0 in $\cc^{p}$ by $B_r$ (resp. $S_r$). Let $\star_\omega$ denote the Hodge star operator induced by a Hermitian metric $\omega$, and let $\tau(z):=|z|^{2}$ be the squared Euclidean norm on $\cc^{p}$.
\begin{definition}[\cite{dan21}]\label{div}
	
	Let $X$ be a compact complex manifold with $\dim_\mathbb{C}X\ge 2$. For $0< p\le n-1$, we say that a holomorphic map $f:\cc^{p}\rightarrow X$ has {\bf subexponential growth} if the following two conditions are satisfied:
\begin{enumerate}
	\item There exist constants $C_1>0$ and $r_0>0$ such that
	$$ \int_{S_{t}}|d \tau|_{f^{\star} \omega} d \sigma_{\omega, f, t} \leq C_{1} t \operatorname{Vol}_{\omega, f}\left(B_{t}\right), \quad t>r_{0},$$
	where  $d \sigma_{\omega, f,t}=\left.{ }({\star}_{f^{*} \omega}\left(\frac{d \tau}{|d \tau|_{f^{*} \omega}{ }}\right)\right)|_{s_{t}}$ . 
	
	\item For every constant $C>0$, we have:
	$$\varlimsup_{b\rightarrow+\infty}(\frac{b}{C}-\log F(b))=+\infty,$$
	where
	$$F(b):=\int_{0}^{b} \operatorname{Vol}_{\omega, f}\left(B_{t}\right) d t=\int_{0}^{b}\left(\int_{B_{t}} f^{*} \omega_{n-1}\right) d t, \quad b>0.$$
\end{enumerate}
\end{definition}
\begin{theorem}
	Every sG-hyperbolic manifold is divisorially hyperbolic.
\end{theorem}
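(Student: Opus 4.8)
The plan is to argue by contradiction, adapting Gromov's exponential-growth estimate to the sG setting. Suppose $X$ carries an sG-hyperbolic metric $\omega$ with associated real $d$-closed, $\tilde d$(bounded) form $\Omega$, yet admits a holomorphic map $f:\cc^{n-1}\to X$ that is non-degenerate at some point $z_0$ and has subexponential growth in the sense of Definition \ref{div}. Since $\cc^{n-1}$ is simply connected, $f$ lifts to $\tilde f:\cc^{n-1}\to\widetilde X$ with $\pi\circ\tilde f=f$. Writing $\tilde\Omega:=\pi^*\Omega=d\Theta$ with $\Theta$ bounded with respect to $\tilde\omega$, and observing that on the $(n-1)$-dimensional source $\cc^{n-1}$ only the $(n-1,n-1)$-component of a $(2n-2)$-form survives pullback, I obtain the crucial identity
$$ f^*\omega_{n-1}=f^*\Omega=\tilde f^*\tilde\Omega=d\eta,\qquad \eta:=\tilde f^*\Theta. $$
Thus the volume form $f^*\omega_{n-1}$ is exact on $\cc^{n-1}$ with an explicit primitive $\eta$ inheriting boundedness from $\Theta$.

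Next I would introduce $V(t):=\operatorname{Vol}_{\omega,f}(B_t)=\int_{B_t}f^*\omega_{n-1}$ and extract three estimates. First, Stokes' theorem gives $V(t)=\int_{S_t}\eta$; since $\tilde f$ is an isometric immersion for $f^*\omega=\tilde f^*\tilde\omega$ wherever $f$ is immersive, pullback is norm-nonincreasing, so $|\eta|_{f^*\omega}\le\|\Theta\|_{L^\infty_{\tilde\omega}}=:C$ off the degeneracy locus, whence $V(t)\le C\int_{S_t}d\sigma_{\omega,f,t}$. Second, the coarea formula applied to $\tau=|z|^2$ yields $V'(t)=2t\int_{S_t}\frac{d\sigma_{\omega,f,t}}{|d\tau|_{f^*\omega}}$. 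Third, Cauchy--Schwarz on $S_t$ gives $\left(\int_{S_t}d\sigma_{\omega,f,t}\right)^2\le\left(\int_{S_t}|d\tau|_{f^*\omega}\,d\sigma_{\omega,f,t}\right)\left(\int_{S_t}\frac{d\sigma_{\omega,f,t}}{|d\tau|_{f^*\omega}}\right)$.

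Combining these with condition (1), namely $\int_{S_t}|d\tau|_{f^*\omega}\,d\sigma_{\omega,f,t}\le C_1 t\,V(t)$, I would obtain $V(t)^2\le C^2\cdot\frac{C_1}{2}\,V(t)V'(t)$, hence the differential inequality $V'(t)\ge c\,V(t)$ with $c=\frac{2}{C^2C_1}>0$, valid for $t\ge t_0:=|z_0|+1$, where $V(t_0)>0$ because $f$ is non-degenerate at $z_0$. Integrating gives $V(t)\ge V(t_0)e^{c(t-t_0)}$, so $F(b)=\int_0^b V(t)\,dt$ grows at least like $e^{cb}$. This contradicts condition (2): taking any $C>1/c$, the quantity $b/C-\log F(b)\le b(1/C-c)+O(1)\to-\infty$, so its $\limsup$ cannot be $+\infty$. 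The contradiction shows no such $f$ exists, i.e. $X$ is divisorially hyperbolic.

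The main obstacle lies in the second paragraph's interplay of boundedness, coarea and Cauchy--Schwarz: justifying the norm bound $|\tilde f^*\Theta|_{f^*\omega}\le\|\Theta\|_{L^\infty}$ via the isometric-immersion property, and carrying out Stokes and the coarea computation on the complement of the analytic degeneracy locus $\{z:\,f^*\omega_{n-1}=0\}$, while checking that this proper analytic (hence measure-zero, non-separating) subset affects neither Stokes' theorem nor the integrals. The algebraic feature that lets the weaker sG hypothesis suffice in place of the balanced condition is precisely the vanishing of every component of $f^*\Omega$ except $f^*\omega_{n-1}$, which allows the single $d$-closed $\tilde d$(bounded) form $\Omega$ to play the role that $\omega^{n-1}$ plays in the balanced-hyperbolic argument.
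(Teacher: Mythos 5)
Your proposal is correct and follows essentially the same route as the paper: the bidegree observation that $f^*\Omega=f^*\omega_{n-1}$ on $\cc^{n-1}$, Stokes with the bounded primitive $\tilde f^*\Theta$, Cauchy--Schwarz on $S_t$, the coarea formula, and condition (1) of Definition \ref{div} combine into an exponential-growth inequality contradicting condition (2). The only (cosmetic) difference is that you derive the pointwise inequality $V'(t)\ge cV(t)$ directly, while the paper integrates first and obtains $F'(r)\ge cF(r)$; your handling of the final contradiction with condition (2) is in fact stated more cleanly than the paper's.
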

\begin{proof}
	Let $X$ be a compact complex manifold of dimension $n$ and let $\omega$ be an sG-hyperbolic metric on $X$. Suppose there exists a holomorphic map $f:\cc^{n-1}\rightarrow X$ non-degenerate at some point that has subexponential growth.

	Let $\pi: \tilde{X} \longrightarrow X$ be the universal cover of $X$. There exists a  $\pi^{*}\omega$-bounded  $(2n-3)$-form  $\Gamma$  on  $\tilde{X}$ such that  $d \Gamma=\pi^{*} \Omega=\pi^{*}\left(\Omega^{n,n-2}+\omega_{n-1}+\Omega^{n-2, n}\right)$ .
	
	Then $\tilde{f}^{*} \Gamma$ is $f^*\omega$-bounded because:
	\begin{align*}
		\left|\tilde{f}^{*} \Gamma\left(v_{1}, \cdots v_{2 n-3}\right)\right| & =\left|\Gamma\left(f_{*} v_{1}, \cdots, f_{*} v_{2 n-3}\right)\right| \\
		& \leqslant C\left|\tilde{f}_{*} v_{1}\right|_{\pi^{*} \omega} \cdots\left|\tilde f_{*} v_{2 n-3}\right|_{\pi^{*} \omega} \\
		& =C\left|v_{1}\right|_{f^{*} \omega} \cdots\left|v_{2 n-3}\right|_{f^{*} \omega}
	\end{align*}
	for any tangent vectors $v_{1}, \cdots, v_{2 n-3}$ in $\mathbb{C}^{n-1}$.
	
	Now, we have
	$$
	\begin{aligned}
		\text { Vol }_{\omega, f}\left(B_{r}\right) & =\int_{B r} f^{*} \omega_{n-1}=\int_{B r} f^{*} \Omega \\
		& =\int_{B r} d\left(\tilde{f}^{*}\Gamma\right) \leqslant C \int_{S_{r}} d \sigma_{\omega, f, r}.
	\end{aligned}$$

	By the H\"older inequality, we have
	
	$$\int_{s_{r}} \frac{1}{|d \tau|_{f^{*} \omega}} d \sigma_{\omega, f, r} \cdot \int_{S_{r}} |d \tau|_{f^{*} w} d \sigma_{\omega, f, r} \geqslant\left(\int_{S_{r}} d \sigma_{\omega, f,r}\right)^{2}.$$
	
	We have  $d \tau=2 r d r$ . Let  $d \mu_{\omega, f, r}$  be the measure on  $S_{r}$  such that
	
	$$d \mu_{\omega, f, r} \wedge \frac{(d \tau)|_{S_r}}{2 r}=\left.\left(f^{*} \omega_{n-1}\right)\right|_{S_r} \text {. }$$
	
	Hence, we have  $$\frac{1}{2 r} d \mu_{\omega,f , r}=\frac{1}{|d \tau|_{f^{*}\omega} } d \sigma_{\omega, f, r}. $$
	
	Then we get:
	$$\begin{array}{l}
		\text {Vol}_{\omega, f}\left(B_{r}\right)=\int_{0}^{r}\left(\int_{S_t} \frac{1}{| d \tau |_{f^*\omega} } d \sigma_{\omega, f, t}\right) d \tau \\
		\geqslant \int_{0}^{r} \frac{\left(\int_{S_{r}} d \sigma_{\omega, f,r}\right)^{2}}{\int_{S_t}\left|d \tau\right|_{f^{*} \omega} d \sigma_{\omega, f, t}} 2 t d t \\
		
		\geqslant \frac{2}{C^{2}} \int_{0}^{r} \frac{\left(\text {Vol}_{\omega, f}\left(B_{t}\right)\right)^{2}}{\int_{S_{t}}|d \tau|_{f^{*} \omega} d{\sigma_{\omega, t}}}td t \\
		\geqslant \frac{2}{C_{1} C^{2}} \int_{0}^{r} \text {Vol}_{\omega, f}\left(B_{t}\right) d t
	\end{array}$$
	for $r$ big enough.
	
	This is to say
	$$
	F^{\prime}(r) \geqslant \frac{2}{C_{1} C^{2}} F(r) \text { ,}$$
	for $r$ big enough.
	 
	Hence, we get$$	(\log F(r))^{\prime} \geqslant \frac{2}{C_{1} C^{2}}.$$
	Finally, we get	$$\log F\left(r_{1}\right)-\frac{2}{C_{1} C^{2}} r_{1}+\frac{2}{C_{1} C^{2}} r_{2} \geqslant \log F\left(r_{2}\right).
	$$
	By (ii) of Definition \ref{div}, we have $F(r_2)=0$ for $r_2$ big enough, which contradicts our assumption.
\end{proof}

\begin{theorem}
	Let $X$ be a compact complex manifold with $\dim_{\mathbb{C}}X=n$. Let $\pi:\tilde{X}\rightarrow X$ be the universal cover of $X$.
	If $X$ is sG-hyperbolic, then there exists no
	non-zero $d$-closed positive (1,1)-current $\tilde{T}\ge 0$
	on $\tilde{X}$ such that $\tilde{T}$ is of $L^1_{\pi^*\omega}$.
\end{theorem}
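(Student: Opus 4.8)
The plan is to argue by contradiction, exploiting the fact that the total mass of such a current would be simultaneously strictly positive (by positivity and non-triviality) and zero (forced by the $\tilde{d}$(bounded)ness). First I would unwind the hypothesis: since $X$ is sG-hyperbolic there is a real $d$-closed $(2n-2)$-form $\Omega$ with $(n-1,n-1)$-component $\omega_{n-1}$ and a $\pi^{*}\omega$-bounded $(2n-3)$-form $\Gamma$ on $\tilde{X}$ with $d\Gamma=\pi^{*}\Omega$. Suppose $\tilde{T}\ge 0$ is a non-zero $d$-closed $(1,1)$-current on $\tilde{X}$ of finite mass $M:=\int_{\tilde{X}}\tilde{T}\wedge\pi^{*}\omega_{n-1}<\infty$ (the $L^1_{\pi^{*}\omega}$ condition); non-triviality and positivity give $M>0$. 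The crucial bidegree observation is that, since $\tilde{T}$ has bidegree $(1,1)$, wedging it against $\pi^{*}\Omega=\pi^{*}\Omega^{n,n-2}+\pi^{*}\omega_{n-1}+\pi^{*}\Omega^{n-2,n}$ annihilates the $(n,n-2)$ and $(n-2,n)$ parts on degree grounds, so $\tilde{T}\wedge\pi^{*}\Omega=\tilde{T}\wedge\pi^{*}\omega_{n-1}$, which is exactly the finite positive trace measure $d\mu_{\tilde{T}}$ of $\tilde{T}$.

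Next I would set up the integration by parts. Because $\tilde{X}$ is non-compact (an infinite Galois cover of the compact $X$), Stokes cannot be applied directly, so I introduce cutoff functions. Using the bounded geometry of $(\tilde{X},\pi^{*}\omega)$ inherited from the compactness of $X$, I regularize the $\pi^{*}\omega$-distance $\rho$ from a fixed base point $\tilde{x}_0$ to obtain smooth $\chi_k$ with $\chi_k\equiv 1$ on the ball $B(k)$, $\operatorname{supp}\chi_k\subset B(k+1)$, and $|d\chi_k|_{\pi^{*}\omega}\le C$ uniformly in $k$. Since $\chi_k\,\tilde{T}\wedge\Gamma$ is a compactly supported current and $d\tilde{T}=0$, Stokes gives $\int_{\tilde{X}}d(\chi_k\,\tilde{T}\wedge\Gamma)=0$, i.e.
$$\int_{\tilde{X}}\chi_k\,\tilde{T}\wedge d\Gamma \;=\; -\int_{\tilde{X}} d\chi_k\wedge\tilde{T}\wedge\Gamma.$$
On the left, $\chi_k\,\tilde{T}\wedge d\Gamma=\chi_k\,\tilde{T}\wedge\pi^{*}\omega_{n-1}=\chi_k\,d\mu_{\tilde{T}}$, and since $0\le\chi_k\le 1$, $\chi_k\to 1$ pointwise, and $\mu_{\tilde{T}}$ is finite, dominated convergence yields $\int_{\tilde{X}}\chi_k\,d\mu_{\tilde{T}}\to M$ as $k\to\infty$.

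It remains to show the boundary term on the right tends to $0$. Here positivity of $\tilde{T}$ is essential: for a positive $(1,1)$-current the masses of the coefficient measures are dominated by the trace measure, so for any smooth $(2n-2)$-form $\beta$ one has $|\tilde{T}\wedge\beta|\le C\,|\beta|_{\pi^{*}\omega}\,d\mu_{\tilde{T}}$ as measures. Applying this to $\beta=d\chi_k\wedge\Gamma$, which is supported in the annulus $B(k+1)\setminus B(k)$, and using $\sup_{\tilde{X}}|\Gamma|_{\pi^{*}\omega}=:C_\Gamma<\infty$ together with $|d\chi_k|_{\pi^{*}\omega}\le C$, gives
$$\left|\int_{\tilde{X}} d\chi_k\wedge\tilde{T}\wedge\Gamma\right| \;\le\; C'\,C_\Gamma\,\mu_{\tilde{T}}\!\left(\tilde{X}\setminus B(k)\right),$$
which tends to $0$ because $\mu_{\tilde{T}}$ has finite total mass and hence negligible tails. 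Passing to the limit in the Stokes identity yields $M=0$, contradicting $M>0$; therefore no such current exists.

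I expect the main obstacle to be the rigorous justification of the cutoff integration by parts rather than any algebraic identity: one must (i) produce cutoffs with uniformly bounded gradient adapted to $\pi^{*}\omega$, which relies on the bounded geometry of the universal cover, and (ii) control the wedge of the current against the boundary form purely through its trace measure, which is precisely where positivity of $\tilde{T}$ and boundedness of the potential $\Gamma$ enter. The finiteness of the mass is what converts the uniform gradient bound into a vanishing tail estimate; without the $L^1_{\pi^{*}\omega}$ hypothesis the boundary term need not decay.
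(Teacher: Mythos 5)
Your proof is correct and follows the same route as the paper's: wedge $\tilde{T}$ with $\pi^{*}\Omega$, reduce to $\tilde{T}\wedge\pi^{*}\omega_{n-1}$ by bidegree reasons, and conclude that the total mass vanishes via Stokes applied to the $L^1_{\pi^{*}\omega}$ current $\tilde{T}\wedge\Gamma$. The only difference is one of detail: your cutoff construction with uniformly bounded gradients and the tail estimate on the trace measure supply the justification for the identity $\int_{\tilde{X}} d(\tilde{T}\wedge\Gamma)=0$ on the non-compact cover, a step the paper asserts without proof.
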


\begin{proof}
	By the definition of sG-hyperbolic manifold, there exists a \( d \)-closed \( (2n-2) \)-form \( \Omega \) on \( X \) where the \( (n-1,n-1) \) component is \( \omega^{n-1} \), and there exists a \( L^\infty_{\pi^*\omega} \)-form \( \Gamma \) of degree \( (2n-3) \) on \( \tilde{X} \) such that \( \pi^*\Omega = d\Gamma \).
	
	For a \( d \)-closed \( (1,1) \)-current \( \tilde{T}\) of \( L^1_{\pi^*\omega} \) on \( \tilde{X} \), \( \tilde{T} \wedge \Gamma \) is again  of \( L^1_{\pi^*\omega} \). Hence also \( d(\tilde{T} \wedge \Gamma) \). Now we have:
	
	\[
	\int_{\tilde{X}} \tilde{T} \wedge \pi^*\omega^{n-1} = \int_{\tilde{X}} \tilde{T} \wedge \pi^*\Omega = \int_{\tilde{X}} d(\tilde{T} \wedge \Gamma) = 0.
	\]
	
	Therefore, there exists no non-zero $d$-closed positive (1,1)-current $\tilde{T}\ge 0$
	on $\tilde{X}$ such that $\tilde{T}$ is of $L^1_{\pi^*\omega}$.
	
\end{proof}

\subsection{Example}

 To further explore the properties of sG-hyperbolic manifolds, we will now look for some examples of sG-hyperbolic manifolds that do not necessarily belong to the category of balanced hyperbolic manifolds.

\vspace{2ex}(a)For convenience, let us name a class of Hermitian metrics as follows:
\begin{definition}
	Let $X$ be a compact complex manifold with $\dim_\mathbb{C}X\ge 2$. A $C^\infty$ positive definite $(1, 1)$-form $\omega$ on $X$ is said to be a {\bf degenerate sG metric} if $\omega^{n-1}=\dd\alpha+\db\beta$ for some $\alpha\in\cinf_{n-2,n-1}(X,\cc)$, $\beta\in\cinf_{n-1,n-2}(X,\cc)$.
	If $X$ carries such a metric, X is said to be a {\bf degenerate sG manifold}.
\end{definition}
In other words, we require $\omega^{n-1}$ to define the zero Aeppli cohomology class (i.e. to be Aeppli-exact). Recall the definitions of Bott-Chern and Aeppli cohomology groups of bidegree $(p,q)$:
\begin{equation*}
	H^{p,q}_{BC}(X, \mathbb{C}) = \frac{\ker(\partial : C^{p,q}(X) \to C^{p+1,q}(X)) \cap \ker(\bar{\partial} : C^{p,q}(X) \to C^{p,q+1}(X))}
	{\mathrm{Im}(\partial \bar{\partial} : C^{p-1,q-1}(X) \to C^{p,q}(X))},
\end{equation*}
\begin{equation*}
	H^{p,q}_{A}(X, \mathbb{C}) = \frac{\ker(\ddb : C^{p,q}(X) \to C^{p+1,q+1}(X))}
	{\mathrm{Im}(\partial : C^{p-1,q}(X) \to C^{p,q}(X)) + \mathrm{Im}(\bar{\partial} : C^{p,q-1}(X) \to C^{p,q}(X))}.
\end{equation*}

It is obvious that degenerate sG metrics are strongly Gauduchon metrics.

Let us characterize degenerate sG manifolds, with contributions from \cite{dan22} and \cite{alessandrini2018forms}.
\begin{theorem}\label{degsg}
	Let $X$ be a compact complex manifold with $\dim_{\mathbb{C}}X=n$.
	\begin{enumerate}
		\item Let $\omega$ be a Hermitian metric on $X$, $\omega$ is degenerate sG if and only if there exists a $\cinf$ $d$-exact $(2n-2)$-form $\Omega$ on $X$ whose $(n-1,n-1)$-component is $\omega^{n-1}$.
		\item Let $\omega$ be a Gauduchon metric on $X$, $\omega$ is degenerate sG if and only if $\h[BC]{1,1}(X,\cc)\wedge[\omega^{n-1}]_A=0$, where $\h[BC]{1,1}(X,\cc)$ is the Bott-Chern cohomology group of bidegree $(1,1)$ and $[\omega^{n-1}]_A$ is the Aeppli cohomology class determined by $\omega^{n-1}$.
		\item $X$ is a degenerate sG manifold if and only if there exists no non-zero $d$-closed bidegree $(1, 1)$-current $T\ge0$ on $X$.
	\end{enumerate}
\end{theorem}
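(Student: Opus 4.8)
For (1), (2), (3) I would handle the three equivalences separately: (1) is bidegree bookkeeping, (2) is an instance of Bott--Chern/Aeppli duality, and (3) is a Hahn--Banach separation in the style of Sullivan and Michelsohn. Starting with (1), for the forward implication I would first exploit that $\omega^{n-1}$ is real: given $\omega^{n-1}=\dd\alpha+\db\beta$, taking real parts (so averaging against the conjugate relation $\omega^{n-1}=\db\bar\alpha+\dd\bar\beta$) lets me replace the pair $(\alpha,\beta)$ by one with $\beta=\bar\alpha$, so that $\omega^{n-1}=\dd\alpha+\db\bar\alpha$ with $\alpha\in\cinf_{n-2,n-1}(X,\cc)$. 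Then $\Omega:=d(\alpha+\bar\alpha)$ is real and $d$-exact, and among the four bidegree pieces of $d(\alpha+\bar\alpha)$ only $\dd\alpha$ and $\db\bar\alpha$ have bidegree $(n-1,n-1)$, so the $(n-1,n-1)$-component of $\Omega$ is exactly $\omega^{n-1}$. Conversely, writing $\Omega=d\Psi$ and decomposing $\Psi$ by bidegree, the only contributions to $(d\Psi)^{n-1,n-1}$ are $\dd\Psi^{n-2,n-1}+\db\Psi^{n-1,n-2}$; setting $\alpha=\Psi^{n-2,n-1}$ and $\beta=\Psi^{n-1,n-2}$ recovers $\omega^{n-1}=\dd\alpha+\db\beta$.

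For (2), the engine is the nondegenerate duality pairing $\h[BC]{1,1}(X,\cc)\times\h[A]{n-1,n-1}(X,\cc)\to\cc$ sending $([\alpha]_{BC},[\psi]_A)$ to $\int_X\alpha\wedge\psi$. The Gauduchon condition $\ddb\,\omega^{n-1}=0$ is precisely what makes $[\omega^{n-1}]_A$ a well-defined Aeppli class and makes $\int_X\alpha\wedge\omega^{n-1}$ independent of the chosen representatives --- one integration by parts uses $\ddb\,\omega^{n-1}=0$, the other uses that a Bott--Chern representative $\alpha$ is $\dd$- and $\db$-closed. Since $\omega$ is degenerate sG exactly when $[\omega^{n-1}]_A=0$, nondegeneracy of the pairing turns this vanishing into the condition that $\int_X\alpha\wedge\omega^{n-1}=0$ for every $[\alpha]_{BC}$, which is the stated identity $\h[BC]{1,1}(X,\cc)\wedge[\omega^{n-1}]_A=0$.

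For (3), one direction is short: if $\omega$ is degenerate sG, take the $d$-exact $\Omega=d\Psi$ furnished by (1); for any $d$-closed $(1,1)$-current $T$, the components of $\Omega$ of bidegree $(n,n-2)$ and $(n-2,n)$ wedge to zero against $T$ for degree reasons, so $\int_X T\wedge\omega^{n-1}=\int_X T\wedge\Omega=\pm\int_X dT\wedge\Psi=0$; if in addition $T\ge0$, then $T\wedge\omega^{n-1}$ is a nonnegative measure, forcing $T=0$. For the converse I would argue by contraposition via separation: in the space of real $(n-1,n-1)$-currents, consider the linear subspace $\mathcal E$ of Aeppli-exact currents together with the strongly positive cone, which has nonempty interior and, through the Michelsohn correspondence $\omega\leftrightarrow\omega^{n-1}$ between positive $(1,1)$-forms and positive $(n-1,n-1)$-forms, contains all the forms $\omega^{n-1}$ attached to Hermitian metrics. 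The absence of a degenerate sG metric means $\mathcal E$ avoids this interior, so Hahn--Banach yields a nonzero current $T$ that is nonnegative on strongly positive $(n-1,n-1)$-forms --- hence a positive $(1,1)$-current --- and vanishes on $\mathcal E$ --- hence satisfies $\dd T=\db T=0$, i.e. $dT=0$ --- contradicting the hypothesis.

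The routine parts are (1) and the easy half of (3); the real obstacle is the converse of (3). Making the separation rigorous requires the correct weak-$*$ topology on currents, the closedness of the relevant cones, the nonempty interior of the strongly positive cone, and the precise duality asserting that positive $(1,1)$-currents are exactly the functionals nonnegative on strongly positive $(n-1,n-1)$-test forms. These are the analytic inputs supplied by the cited work of Popovici \cite{dan22} and Alessandrini \cite{alessandrini2018forms}, which I would invoke rather than reprove.
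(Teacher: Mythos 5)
Your proposal is correct, and for parts (1) and (3) it follows essentially the paper's route: (1) is the same bidegree bookkeeping, the easy half of (3) is the identical Stokes argument, and your Hahn--Banach separation is the dual formulation of the paper's (the paper separates the closed subspace of $d$-closed currents from a compact convex, normalized base of the positive cone inside the space of currents and extracts a $d$-exact form with positive definite component, whereas you separate the subspace of Aeppli-exact forms from the cone of positive definite $(n-1,n-1)$-forms and extract the positive $d$-closed $(1,1)$-current directly; the two setups are interchangeable). One slip to correct there: your separation must take place in the space of smooth real $(n-1,n-1)$-\emph{forms}, where the positive cone is open --- in the space of currents with the weak topology that cone has empty interior --- and the separating functional is then the desired $(1,1)$-current. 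Where you genuinely diverge is part (2): you deduce the hard direction from the non-degeneracy of the duality pairing $\h[BC]{1,1}(X,\cc)\times\h[A]{n-1,n-1}(X,\cc)\to\cc$, which immediately converts $[\omega^{n-1}]_A=0$ into the vanishing of all products against Bott--Chern classes. The paper instead proves the required instance of this non-degeneracy by hand: it invokes the two orthogonal three-space decompositions attached to $\Delta_A$ and $\Delta_{BC}$, uses that $\star$ carries Aeppli-harmonic $(n-1,n-1)$-forms to Bott--Chern-harmonic $(1,1)$-forms, and tests the hypothesis against $\alpha_h=\star(\omega^{n-1})_h$ to force $\int_X|(\omega^{n-1})_h|^2\,dV=0$, hence $(\omega^{n-1})_h=0$. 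Your version is shorter if one is willing to quote Schweitzer's duality theorem as a black box; the paper's is self-contained and exhibits explicitly which harmonic representative is being annihilated. Both arguments are valid.
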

\begin{proof}
	(1) Let $\Omega$ be the $d$-exact $(2n-2)$-form mentioned in (1), then the $(n-1,n-1)$-component of $\Omega$ is obviously in $\text{Im} \partial + \text{Im}\db$. Conversely, if we have $\omega^{n-1}=\dd\alpha+\db\beta$ for some $\alpha\in\cinf_{n-2,n-1}(X,\cc)$ and some $\beta\in\cinf_{n-1,n-2}(X,\cc)$, then $d(\alpha+\beta)$ is a $d$-exact $(2n-2)$-form $\Omega$ on $X$ whose $(n-1,n-1)$-component is $\omega^{n-1}$.
	
	(2)
	Let us check that $[\omega^{n-1}]_A \wedge \cdot : H^{1,1}_{BC}(X, \mathbb{C}) \rightarrow H^{n,n}_A(X,\cc)$ is well-defined first.
	
	Because the Hermitian metric $\omega$ is Gauduchon, i.e. $\partial \bar{\partial} \omega^{n-1} = 0$, the Aeppli class $[\omega^{n-1}]_A$ is well-defined.
	
	For a $d$-closed $(1,1)$-form $\alpha$, we have $$\partial \bar{\partial} (\omega^{n-1} \wedge \alpha)= \partial \bar{\partial} \omega^{n-1} \wedge \alpha= 0.$$
	
	If $\alpha = \partial \bar{\partial} \varphi$ for some $\varphi \in C^{\infty}(X, \mathbb{C})$, we have $$\omega^{n-1} \wedge \partial \bar{\partial} \varphi = \partial(\omega^{n-1} \wedge \bar{\partial} \varphi) + \bar{\partial}(\varphi\partial  \omega^{n-1})\in\mathrm{Im}\partial + \mathrm{Im}\bar{\partial}.$$
	
	($\Rightarrow$) If $\omega^{n-1}  = \partial \beta + \bar{\partial} \gamma$ for some $\beta\in\cinf_{n-2,n-1}(X,\cc)$, $\gamma\in\cinf_{n-1,n-2}(X,\cc)$,  we have $$\omega^{n-1} \wedge \alpha = \partial(\beta \wedge \alpha) - \bar{\partial}(\gamma \wedge \alpha)\in\mathrm{Im}\partial + \mathrm{Im}\bar{\partial}$$for all $d$-closed $(1,1)$-forms $\alpha$.
	
	Hence, for all $[\alpha]_{BC}\in\h[BC]{1,1}(X,\cc)$, we have $$[\alpha]_{BC}\wedge[\omega^{n-1}]_A=[\omega^{n-1} \wedge \alpha]_A = 0.$$

	($\Leftarrow$)Denote by $\Delta_{A}$ and $\Delta_{BC}$ the Aeppli Laplacian and Bott-Chern Laplacian  induced by $\omega$.
	
	Because we have the orthogonal 3-space decomposition \cite{schweitzer2007autour}:$$C^{\infty}_{n-1,n-1}(X, \mathbb{C}) = \ker \Delta_{A} \oplus (\mathrm{Im}\, \partial + \mathrm{Im}\, \bar{\partial}) \oplus \mathrm{Im}(\bar{\partial} \partial)^*$$ and $\ddb \omega^{n-1}=0$, there is a decomposition $$\omega^{n-1} = (\omega^{n-1})_h + \partial \Gamma + \bar{\partial} \Gamma'.$$
	
	Due to another orthogonal 3-space decomposition (\cite{5a8ecba0-bf3c-3a04-94d0-f3fe0a96f498}, see also\cite{schweitzer2007autour}): $$C^{\infty}_{1,1}(X, \mathbb{C}) = \ker \Delta_{BC} \oplus \mathrm{Im}(\partial \bar{\partial}) \oplus (\mathrm{Im}\, \partial^* + \mathrm{Im}\, \bar{\partial}^*),$$ for all $d$-closed $(1,1)$-forms $\alpha$, we have $$\alpha = \alpha_h + \partial \bar{\partial} \varphi$$for some $\varphi\in\cinf(X,\cc)$.
	
	$H^{1,1}_{BC}(X, \mathbb{C}) \wedge [\omega^{n-1}]_A = 0$ means for every $\alpha_h \in \text{ker} \Delta_{BC}$, we have $$\omega^{n-1} \wedge \alpha_h = (\omega^{n-1})_h \wedge \alpha_h + \partial(\alpha_h \wedge \varphi') + \bar{\partial}(\alpha_h \wedge \psi')\in\mathrm{Im}\partial + \mathrm{Im}\bar{\partial}.$$
	
	The duality of the two decompositions mentioned above implies $$\star(\omega^{n-1})_h \in \text{ker} \Delta_{BC}.$$ Therefore, we have $$(\omega^{n-1})_h \wedge \star (\omega^{n-1})_h = |(\omega^{n-1})_h|^2 dV\in \mathrm{Im}\partial + \mathrm{Im}\bar{\partial}.$$
	Hence we have $$\int_X |(\omega^{n-1})_h|^2 dV = 0.$$
	Thus, we deduce $(\omega^{n-1})_h = 0$ and $\omega^{n-1} \in \mathrm{Im}\partial + \mathrm{Im}\bar{\partial}.$
	
	(3)($\Rightarrow$) 	Let $\Omega$ be a form as in (1). Suppose there is a non-zero $d$-closed $(1,1)$-current $T \geq 0$. Because $\omega^{n-1}$ is positive definite, we have $$\int_X T \wedge \Omega = \int_X T \wedge \omega^{n-1} > 0.$$
	
	On the other hand, by the $d$-closedness of $T$ and the $d$-exactness of $\Omega$, we know that $T \wedge \Omega$ is $d$-exact. Therefore, we have $$\int_X T \wedge \Omega = 0.$$ This would be a contradiction.
	
	($\Leftarrow$)Let $\mathcal{E}'_2(X)$ (resp. $\mathcal{E}'_{1,1}(X)$) be the space of currents of dimension 2 (resp. bidimension $(1,1)$), and let $\mathcal{A}$ be the convex closed subspace of $\mathcal{E}'_2(X)$ of $d$-closed currents of dimension 2. 
	
	Fix a Hermitian metric $\omega$ on $X$. We denote $\mathcal{B} = \{T \in \mathcal{E}'_{1,1}(X) \mid \int_X T \wedge \omega^{n-1} = 1\}$. Then $\mathcal{B}$ is a convex compact subset of $\mathcal{E}'_2(X)$ by \cite{sullivan1976cycles}.
	
	Suppose if there exists no non-zero $(1,1)$-current $T \ge 0$, i.e. $\mathcal{A} \cap \mathcal{B} = \varnothing$. By the Hahn-Banach separation theorem, there exists a linear functional that vanishes identically on $\mathcal{A}$ and is positive on $\mathcal{B}$. That is to say, there exists a $d$-exact form $\Omega$ of degree 2 whose $(1,1)$-component is a Hermitian metric.
\end{proof}
Due to the compactness of $X$, the $d$-exact $(2n-2)$-form $\Omega$ in Theorem \ref{degsg} (1)  is $\tilde{d}$(bounded). Hence, it is clear that every degenerate sG metric is sG-hyperbolic.
\begin{corollary}
	If a compact complex manifold $X$ is degenerate sG, $X$ is divisorially hyperbolic.
\end{corollary}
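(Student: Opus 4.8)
The plan is to obtain the statement by composing two facts already established in this section: that a degenerate sG metric is automatically sG-hyperbolic, and that every sG-hyperbolic manifold is divisorially hyperbolic (the theorem proved above). Since both implications are in hand, the corollary should follow essentially by transitivity, and the only point requiring care is the passage from $d$-exactness on the compact manifold $X$ to $\tilde{d}$(boundedness) on its universal cover.

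First I would fix a degenerate sG metric $\omega$ on $X$, so that $\omega^{n-1}=\dd\alpha+\db\beta$ for suitable $\alpha\in\cinf_{n-2,n-1}(X,\cc)$ and $\beta\in\cinf_{n-1,n-2}(X,\cc)$. By Theorem \ref{degsg}(1), this is equivalent to the existence of a $\cinf$ $d$-exact $(2n-2)$-form $\Omega$ on $X$ whose $(n-1,n-1)$-component is $\omega^{n-1}$; concretely one may take $\Omega=d(\alpha+\beta)$. Writing $\Omega=d\gamma$ with $\gamma$ a global $(2n-3)$-form on $X$, I would then pull back along the universal covering $\pi:\tilde{X}\rightarrow X$ to obtain $\pi^*\Omega=d(\pi^*\gamma)$.

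The key observation is that $\gamma$, being a smooth form on the compact manifold $X$, is bounded with respect to $\omega$; since $\pi$ is a local biholomorphism and the lifted metric $\pi^*\omega$ agrees with $\omega$ on every deck-translate of a fundamental domain, the lift $\pi^*\gamma$ is bounded with respect to $\pi^*\omega$. Thus $\Omega$ is $\tilde{d}$(bounded) with respect to $\omega$, which is precisely the defining condition for $\omega$ to be an sG-hyperbolic metric. Hence $X$ is sG-hyperbolic, and applying the theorem that every sG-hyperbolic manifold is divisorially hyperbolic finishes the argument.

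I do not expect any genuine obstacle here: the entire content of the corollary lies in the two prior theorems, and the compactness argument of the preceding remark already supplies the $\tilde{d}$(boundedness) upgrade. The only step meriting explicit verification is that the sup-bound on $\gamma$ over $X$ descends to a uniform bound for $\pi^*\gamma$ over all of $\tilde{X}$, which is immediate from the invariance of $\pi^*\omega$ under the deck transformation group together with the compactness of the quotient.
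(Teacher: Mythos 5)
Your proposal is correct and matches the paper's own argument: the paper likewise notes that compactness of $X$ makes the $d$-exact form $\Omega$ of Theorem \ref{degsg}(1) automatically $\tilde{d}$(bounded), so a degenerate sG metric is sG-hyperbolic, and then invokes the theorem that sG-hyperbolicity implies divisorial hyperbolicity. Your explicit verification that the bound on the potential $\gamma$ lifts to $\pi^*\gamma$ via the deck-transformation invariance of $\pi^*\omega$ is exactly the content the paper leaves implicit.
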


We have the deformation openness of the degenerate sG condition.
\begin{theorem}
	Let \(\pi: X \rightarrow B\) be a holomorphic family of compact complex manifolds \(X_t := \pi^{-1}(t)\), with \(t \in B\). Fix an arbitrary reference point \(0 \in B\). If the fibre \(X_0\) is a degenerate sG manifold, then, for all \(t \in B\) sufficiently close to 0, the fibre \(X_t\) is again a degenerate sG manifold.
	
\end{theorem}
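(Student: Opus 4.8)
The plan is to mirror the proof of Theorem \ref{defopen}, replacing the $\tilde{d}$(bounded) characterization by the $d$-exact characterization of degenerate sG metrics supplied by Theorem \ref{degsg}(1). The crucial observation is that $d$-exactness of a fixed real $(2n-2)$-form is a property of the underlying smooth manifold alone, and therefore survives any variation of the complex structure. This makes the argument in fact simpler than the sG-hyperbolic case, where one additionally had to control a $d$-potential uniformly in $t$; here no such estimate is needed.

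First I would fix a degenerate sG metric $\omega_0$ on $X_0$. By Theorem \ref{degsg}(1) there is a $\cinf$ $d$-exact $(2n-2)$-form $\Omega = d\eta$ on $X_0$ whose $(n-1,n-1)$-component with respect to the complex structure $J_0$ equals $\omega_0^{n-1}$. After shrinking $B$, Ehresmann's theorem lets me regard all fibres as one fixed smooth manifold $M$ carrying a smooth family of complex structures $J_t$, so that $\Omega$ and $\eta$ become fixed forms on $M$ with $\Omega = d\eta$ independently of $t$.

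Next I would decompose $\Omega$ into bidegrees with respect to $J_t$ and look at its $(n-1,n-1)$-component $\Omega_t^{n-1,n-1}$. Since $J_t$ depends smoothly on $t$, the component $\Omega_t^{n-1,n-1}$ depends continuously on $t$, and at $t=0$ it is $\omega_0^{n-1}$, which lies in the open cone of $(n-1)$-st powers of positive definite $(1,1)$-forms. Because $M$ is compact, positivity is uniform and persists for all $t$ sufficiently close to $0$, so $\Omega_t^{n-1,n-1}$ stays in that open cone. Applying the Michelsohn correspondence (\cite{10.1007/BF02392356}, (4.8)), I obtain for each such $t$ a positive definite $(1,1)$-form $\omega_t$ with $\omega_t^{n-1} = \Omega_t^{n-1,n-1}$.

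Finally, $\Omega = d\eta$ is still $d$-exact on $\xt$ — exactness does not see the complex structure — and its $(n-1,n-1)$-component with respect to $J_t$ is exactly $\omega_t^{n-1}$. Theorem \ref{degsg}(1) then shows that $\omega_t$ is a degenerate sG metric on $\xt$, so $\xt$ is a degenerate sG manifold for all $t$ near $0$. The only points requiring care are the continuous dependence of the bidegree decomposition on $t$ together with the openness of the positivity cone, which together let positivity pass from $t=0$ to nearby $t$, and the applicability of the Michelsohn correspondence; I expect neither to be a genuine obstacle, and the delicate uniform-potential estimate of Theorem \ref{defopen} is simply absent here.
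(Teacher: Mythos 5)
Your argument is correct and is essentially identical to the paper's own proof: both rely on the fact that $d$-exactness of $\Omega$ is a property of the underlying differentiable structure and hence persists under deformation, combined with the continuity in $t$ of the $(n-1,n-1)$-component, openness of positivity on a compact manifold, and the Michelsohn correspondence (\cite{10.1007/BF02392356}, (4.8)) to extract $\omega_t$. Your write-up merely spells out the Ehresmann trivialization and the invocation of Theorem \ref{degsg}(1) at the end, which the paper leaves implicit.
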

\begin{proof}
	This is quite obvious because of Theorem \ref{degsg}(1). Deformation does not change the $d$-exactness of $\Omega$ because it does not change the differentiable structure of the fibre $X_0$. The \((n-1,n-1)\)-component \(\Omega^{n-1,n-1}_t\) of $\Omega$ with respect to the complex structure of \(X_t\) is positive definite for \(t \in B\) sufficiently close to \(0\) due to the continuity of \(\Omega^{n-1,n-1}_t\) with respect to $t$. By Lemma (\cite{10.1007/BF02392356}, (4.8)), there exists a metric \(\omega_t\) such that \(\omega_t^{n-1} = \Omega_t^{n-1, n-1}\).
%
\end{proof}

(b)We now present a more concrete example.

Let \( G \) be a semi-simple complex Lie group, and \( \Gamma \) be a co-compact lattice of \( G \). By \cite{dan21}, \( G/\Gamma \) is balanced hyperbolic. An even stronger statement holds: it is actually degenerate balanced (see \cite{popovici2013aeppli}) by \cite{yachou1998varietes}.

We have the deformation openness of sG-hyperbolicity, but not of the balanced condition by \cite{alessandrini1990small}. Now we take \( G = SL_2(\mathbb{C}) \), $\Gamma$ a co-compact lattice of $G$. The deformations of \( X:=G/\Gamma \) are sG-hyperbolic but not necessarily balanced hyperbolic. We basically follow the process of \cite{rajan1994deformations}.

We choose a co-compact lattice $\Gamma$ of non-zero first Betti number. Let \( K \) be a maximal compact subgroup of \( G \) with an invariant Hermitian metric on \( G \). Fix a maximal torus \( S \) of \( K \) and a system of positive roots of \( G \) with respect to \( S \). Because the first Betti number of the lattice $\Gamma$ is not zero, \( H^{0,1}(X, T^{1,0}X) \) is not zero. Let \( \lambda \) be a highest weight of \( K \) on \( H^{0,1}(X, T^{1,0}X) \), and \( V \) be the corresponding highest weight subspace. By Theorem 3 of \cite{rajan1994deformations}, \( G/\Gamma \) can be deformed in all directions in \( V \).

(c)By Proposition \ref{product}, we have the following
\begin{corollary}
	If $X_1$ is a degenerate sG manifold and $X_2$ a balanced hyperbolic manifold, $X_1\times X_2$ is an sG-hyperbolic manifold.
\end{corollary}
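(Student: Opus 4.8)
The plan is to reduce the statement to Proposition \ref{product}, which already asserts that a product of two sG-hyperbolic manifolds is sG-hyperbolic. It therefore suffices to verify that each factor is itself sG-hyperbolic under the weaker-looking hypotheses given, namely that $X_1$ is degenerate sG and that $X_2$ is balanced hyperbolic.

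For $X_1$, this is immediate from the remark following Theorem \ref{degsg}: by part (1) of that theorem there is a $d$-exact $(2m-2)$-form $\Omega_1$ on the compact manifold $X_1$ whose $(m-1,m-1)$-component is $\omega_1^{m-1}$, and since $X_1$ is compact a smooth primitive of $\Omega_1$ is automatically bounded, so $\Omega_1$ is $\tilde{d}$(bounded) and $\omega_1$ is an sG-hyperbolic metric.

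For $X_2$, the key observation is that a balanced metric $\omega_2$ already supplies the required $d$-closed form for free: because $\omega_2$ is balanced, $\omega_2^{n-1}$ is $d$-closed, it is real, and it is a pure $(n-1,n-1)$-form, hence it equals its own $(n-1,n-1)$-component. Balanced hyperbolicity is precisely the assertion that $\omega_2^{n-1}$ is $\tilde{d}$(bounded) with respect to $\omega_2$. Thus, taking $\Omega_2 = \omega_2^{n-1}$ (up to the harmless normalizing factor $1/(n-1)!$), the metric $\omega_2$ is sG-hyperbolic. With both factors now sG-hyperbolic, Proposition \ref{product} yields the conclusion directly.

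I do not anticipate a genuine obstacle here; the real content is the observation that \emph{balanced hyperbolic} is a special case of \emph{sG-hyperbolic} — one in which the closed form $\Omega$ may be taken to be $\omega^{n-1}$ itself — so that the mixed hypothesis collapses into the hypothesis of Proposition \ref{product}. The only point demanding a little care, should one prefer to argue directly rather than by citation, is the boundedness of the primitive of the product form $\Gamma$ with respect to the product metric $\omega$: there one uses that the smooth forms $\omega_2^{n}$ and $\omega_1^{m}$ pulled back from the compact factors are bounded, so that their wedge products with the bounded primitives $\Theta_1,\Theta_2$ on the universal covers remain bounded with respect to $\pi^*\omega$ — exactly the estimate already carried out in the proof of Proposition \ref{product}.
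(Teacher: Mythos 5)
Your proposal is correct and follows exactly the paper's route: the paper derives this corollary directly from Proposition \ref{product}, relying on the observation (stated just before the corollary) that degenerate sG manifolds are sG-hyperbolic and on the fact that a balanced hyperbolic metric is trivially sG-hyperbolic since $\omega^{n-1}$ is its own $d$-closed real extension. You merely spell out these two reductions in more detail than the paper does.
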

But $X_1\times X_2$ is not necessarily a degenerate sG or balanced hyperbolic manifold.

\section{ Weakly $p$-K\"ahler Hyperbolicity and  Pluriclosed Star Split Hyperbolicity}

Building on our exploration of sG-hyperbolic manifolds and their exemplifications, we now expand our horizon to encompass two other hyperbolicity variants: weakly p-Kähler hyperbolicity and pluriclosed star split hyperbolicity.

Recall that a $(p,p)$-form  $\Omega$  is called a weakly $p$-K\"ahler structure if  $\Omega$  is weakly strictly positive and  is the $(p,p)$-component of a real $d$-closed $2p$-form $\hat{\Omega}$.

\begin{definition}\label{defp}
	Let  $X$  be a compact complex manifold with 
	$\operatorname{dim}_{\cc} X=n \geqslant 2$. A weakly $p$-Kähler structure $\Omega$ is said
	to be {\bf weakly $p$-K\"ahler hyperbolic} if there exists a $d$-closed $2p$-form  $\hat{\Omega}$ , such that  $\Omega$  is the $(p,p)$-component of  $\hat{\Omega}$, and  $\hat{\Omega}$  is  $\tilde{d}$(bounded) with respect to an arbitrary metric  $\omega$  on  $X$.
\end{definition}

For two metrics  $\omega_{1}$  and  $\omega_{2}$  on  X , we have  $\frac{1}{C} \omega_{1}\le \omega_{2}\le C \omega_{1}$ , for some constant  $C>0$  because of the compactness of  $X$. Thus, we deduce that the property of subexponential growth of a function is independent of the choice of metric, and that Definition \ref{defp} is well-posed.

\begin{theorem}
	Let  $X$ be a compact complex  manifold of dimension  $n$. If $X$  is weakly $p$-K\"ahler hyperbolic, then  there is no holomorphic map $f: \mathbb{C}^{p} \to X$ such that $f$ is non-degenerate at some point and has subexponential growth (with $\operatorname{Vol}_{\Omega, f}\left(B_{t}\right):=\int_{B_{t}} f^{*} \Omega$).
\end{theorem}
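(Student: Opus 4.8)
The plan is to run, essentially verbatim, the argument used earlier to show that every sG-hyperbolic manifold is divisorially hyperbolic, making the substitutions $n-1\rightsquigarrow p$, the volume form $\omega_{n-1}\rightsquigarrow\Omega$, and the $d$-closed $(2n-2)$-form $\rightsquigarrow\hat\Omega$. Concretely, I would argue by contradiction: suppose there is a holomorphic $f:\cc^p\to X$ that is non-degenerate at some point and has subexponential growth, and then extract from weakly $p$-Kähler hyperbolicity a differential inequality for $F$ that contradicts condition (ii) of Definition \ref{div}.

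First I would fix the reference metric $\omega$ and use the $\tilde d$(boundedness) of $\hat\Omega$ to produce a $\pi^*\omega$-bounded $(2p-1)$-form $\Gamma$ on the universal cover $\tilde X$ with $d\Gamma=\pi^*\hat\Omega$. Since $\cc^p$ is simply connected, $f$ lifts to $\tilde f:\cc^p\to\tilde X$ with $\pi\circ\tilde f=f$, and the boundedness of $\Gamma$ gives, exactly as before, that $\tilde f^*\Gamma$ is $f^*\omega$-bounded. The one genuinely new input is the bidegree vanishing $f^*\hat\Omega=f^*\Omega$: writing $\hat\Omega=\sum_{a+b=2p}\hat\Omega^{a,b}$, a holomorphic map out of $\cc^p$ annihilates every component with $a>p$ or $b>p$, and the constraint $a+b=2p$ leaves only $a=b=p$, i.e.\ the weakly $p$-Kähler form $\Omega$. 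Combined with Stokes and the boundedness of $\tilde f^*\Gamma$, this yields the key estimate
\[
\operatorname{Vol}_{\Omega,f}(B_r)=\int_{B_r}f^*\Omega=\int_{B_r}f^*\hat\Omega=\int_{B_r}d(\tilde f^*\Gamma)=\int_{S_r}\tilde f^*\Gamma\le C\int_{S_r}d\sigma_{\omega,f,r},
\]
where weak strict positivity of $\Omega$ guarantees that $f^*\Omega\ge0$ is a genuine nonnegative top-form on $\cc^p$, so that $\operatorname{Vol}_{\Omega,f}(B_r)$ is a well-defined non-decreasing function with $F'(r)=\operatorname{Vol}_{\Omega,f}(B_r)$.

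From here I would apply the H\"older inequality to $d\sigma_{\omega,f,r}$ together with a coarea disintegration and condition (i) of Definition \ref{div} to obtain $F'(r)\ge \tfrac{2}{C_1C^2}F(r)$ for $r$ large, whence $(\log F)'\ge\tfrac{2}{C_1C^2}$; integrating this contradicts condition (ii) of Definition \ref{div}, which forces $F$ to vanish for large argument. All of these formal manipulations transcribe directly from the sG proof.

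The main thing to get right is the coarea step. In the sG case $f^*\omega_{n-1}$ equals the Riemannian volume of the pullback metric $f^*\omega$, and it is precisely this that makes the disintegration identity $\tfrac{1}{2r}\,d\mu_{\omega,f,r}=\tfrac{1}{|d\tau|_{f^*\omega}}\,d\sigma_{\omega,f,r}$ valid. A general weakly $p$-Kähler form $f^*\Omega$ need not be the volume of any fixed metric, so I would instead invoke the pointwise comparability $\tfrac{1}{C'}\,f^*\tfrac{\omega^p}{p!}\le f^*\Omega\le C'\,f^*\tfrac{\omega^p}{p!}$, which follows from the compactness of $X$ and the weak strict positivity of $\Omega$. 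Since $f^*\tfrac{\omega^p}{p!}$ \emph{is} the Riemannian volume of $f^*\omega$ on $\cc^p$, this comparability reduces the coarea manipulation to the sG situation at the cost of absorbing $C'$ into the constants, and because condition (ii) is invariant under replacing $F$ by a constant multiple, the resulting contradiction transfers back to $\operatorname{Vol}_{\Omega,f}$. I expect this comparison between the abstract positive form $\Omega$ and the metric volume to be the only real obstacle; everything else is a faithful copy of the sG argument.
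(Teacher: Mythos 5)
Your proposal is correct and follows essentially the same route as the paper: lift $\hat\Omega$ to a bounded potential $\Gamma$ on the universal cover, use the bidegree argument $f^*\hat\Omega=f^*\Omega$, apply Stokes and the H\"older/coarea estimates to get $F'(r)\ge cF(r)$, and contradict condition (ii) of Definition \ref{div}. In fact your explicit comparison $\tfrac{1}{C'}f^*\tfrac{\omega^p}{p!}\le f^*\Omega\le C'f^*\tfrac{\omega^p}{p!}$ at the coarea step is more careful than the paper, whose displayed disintegration identity still carries $f^*\omega_{n-1}$ over verbatim from the sG case without addressing the discrepancy between $\operatorname{Vol}_{\Omega,f}$ and the Riemannian volume of $f^*\omega$.
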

\begin{proof}
	Suppose there exists a holomorphic map $f:\cc^{p}\rightarrow X$ non-degenerate at some point and has subexponential growth.

	Let $\pi: \tilde{X} \longrightarrow X$ be the universal cover of $X$. Fix a metric $\omega$ on $X$. There exists a  $\pi^{*}\omega$-bounded  $(2p-1)$-form  $\Gamma$  on  $\tilde{X}$, such that  $d \Gamma=\pi^{*} \hat\Omega$ .
	
	Then $\tilde{f}^{*} \Gamma$ is $f^*\omega$-bounded:
	\begin{align*}
		\left|\tilde{f}^{*} \Gamma\left(v_{1}, \cdots v_{2 p-1}\right)\right| & =\left|\Gamma\left(f_{*} v_{1}, \cdots, f_{*} v_{2 p-1}\right)\right| \\
		& \leqslant C\left|\tilde{f}_{*} v_{1}\right|_{\pi^{*} \omega} \cdots\left|\tilde f_{*} v_{2 p-1}\right|_{\pi^{*} \omega} \\
		& =C\left|v_{1}\right|_{f^{*} \omega} \cdots\left|v_{2 p-1}\right|_{f^{*} \omega}
	\end{align*}
	for any tangent vectors $v_{1}, \cdots, v_{2 p-1}$ in $\mathbb{C}^{p}$.
	
	Now, we have
	$$
	\begin{aligned}
		0<\langle f^*\Omega,[B_r]\rangle=\text { Vol }_{\omega, f}\left(B_{r}\right) & =\int_{B r} f^{*} \hat\Omega \\
		& =\int_{B r} d\left(\tilde{f}^{*}\Gamma\right) \leqslant C \int_{S_{r}} d \sigma_{\omega, f, r},
	\end{aligned}$$
	
	where  $d \sigma_{\omega, f,r}=\left.{ }({\star}_{f^{*} \omega}\left(\frac{d \tau}{|d \tau|_{f^{*} \omega}{ }}\right)\right)|_{s_{r}}$. 
	
	By H\"older inequality, we have
	
	$$\int_{s_{r}} \frac{1}{|d \tau|_{f^{*} \omega}} d \sigma_{\omega, f, r} \cdot \int_{S_{r}} |d \tau|_{f^{*} w} d \sigma_{\omega, f, r} \geqslant\left(\int_{S_{r}} d \sigma_{\omega, f,r}\right)^{2}.$$
	
	We have  $d \tau=2 r d r$ . Let  $d \mu_{\omega, f, r}$  be the measure on  $S_{r}$  such that
	
	$$d \mu_{\omega, f, r} \wedge \frac{(d \tau)|_{S_r}}{2 r}=\left.\left(f^{*} \omega_{n-1}\right)\right|_{S_r} \text {. }$$
	
	Hence we have  $$\frac{1}{2 r} d \mu_{\omega,f , r}=\frac{1}{|d \tau|_{f^{*}\omega} } d \sigma_{\omega, f, r}. $$
	Then we get:
	$$\begin{array}{l}
		\text {Vol}_{\omega, f}\left(B_{r}\right)=\int_{0}^{r}\left(\int_{S_t} \frac{1}{| d \tau |_{f^*\omega} } d \sigma_{\omega, f, t}\right) d \tau \\
		\geqslant \int_{0}^{r} \frac{\left(\int_{S_{r}} d \sigma_{\omega, f,r}\right)^{2}}{\int_{S_t}\left|d \tau\right|_{f^{*} \omega} d \sigma_{\omega, f, t}} 2 t d t \\
		
		\geqslant \frac{2}{C^{2}} \int_{0}^{r} \frac{\left(\text {Vol}_{\omega, f}\left(B_{t}\right)\right)^{2}}{\int_{S_{t}}|d \tau|_{f^{*} \omega} d{\sigma_{\omega, t}}}td t \\
		\geqslant \frac{2}{C_{1} C^{2}} \int_{0}^{r} \text {Vol}_{\omega, f}\left(B_{t}\right) d t
	\end{array}$$
	for $r$ big enough.
	
	This is to say
	$$
	F^{\prime}(r) \geqslant \frac{2}{C_{1} C^{2}} F(r)$$
	for $r$ big enough. 
	
	Hence, we get$$	(\log F(r))^{\prime} \geqslant \frac{2}{C_{1} C^{2}}.$$
	Finally, we get	$$\log F\left(r_{1}\right)-\frac{2}{C_{1} C^{2}} r_{1}+\frac{2}{C_{1} C^{2}} r_{2} \geqslant \log F\left(r_{2}\right).
	$$
	By (ii) of Definition \ref{div}, we have $F(r_2)=0$ for $r_2$ big enough, which contradicts our assumption.
\end{proof}

\begin{theorem}
	Let $X$ be a compact complex manifold with $\dim_{\mathbb{C}}X=n$. Let $\pi:\tilde{X}\rightarrow X$ be the universal cover of $X$.
	If $X$ is weakly p-K\"ahler hyperbolic, then there exists no
	non-zero $d$-closed positive (n-p,n-p)-current $\tilde{T}\ge 0$
	on $\tilde{X}$ such that $\tilde{T}$ is of $L^1_{\pi^*\omega}$.
\end{theorem}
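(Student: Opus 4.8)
The plan is to mimic the argument given above for the sG-hyperbolic case (the theorem ruling out $d$-closed positive $(1,1)$-currents), replacing the degree $2n-2$ by $2p$ and the bidegree $(1,1)$ by $(n-p,n-p)$, while paying attention to the weak-versus-strong positivity duality that was invisible when $p=n-1$. First I would unwind Definition \ref{defp}: since $X$ is weakly $p$-K\"ahler hyperbolic there is a $d$-closed real $2p$-form $\hat{\Omega}$ whose $(p,p)$-component is the weakly strictly positive form $\Omega$, and $\hat{\Omega}$ is $\tilde{d}$(bounded) with respect to a fixed metric $\omega$. Pulling back to the universal cover produces a $(2p-1)$-form $\Gamma$ on $\tilde{X}$ that is $\pi^*\omega$-bounded (i.e. $L^\infty_{\pi^*\omega}$) and satisfies $d\Gamma=\pi^*\hat{\Omega}$.

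Next, given a $d$-closed positive $(n-p,n-p)$-current $\tilde{T}$ of $L^1_{\pi^*\omega}$, I would form the degree-$(2n-1)$ current $\tilde{T}\wedge\Gamma$. Because $\tilde{T}$ is $L^1_{\pi^*\omega}$ and $\Gamma$ is $L^\infty_{\pi^*\omega}$, the product $\tilde{T}\wedge\Gamma$ is again $L^1_{\pi^*\omega}$, hence so is $d(\tilde{T}\wedge\Gamma)$. Since $\deg\tilde{T}=2(n-p)$ is even and $d\tilde{T}=0$, the Leibniz rule gives
$$ d(\tilde{T}\wedge\Gamma)=\tilde{T}\wedge d\Gamma=\tilde{T}\wedge\pi^*\hat{\Omega}, $$
and for bidegree reasons only the $(p,p)$-component of $\hat{\Omega}$ survives against the $(n-p,n-p)$-current $\tilde{T}$, so $\tilde{T}\wedge\pi^*\hat{\Omega}=\tilde{T}\wedge\pi^*\Omega$ as $(n,n)$-currents. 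Integrating this exact $L^1$ top-degree current over $\tilde{X}$ yields, exactly as in the sG case,
$$ \int_{\tilde{X}}\tilde{T}\wedge\pi^*\Omega=\int_{\tilde{X}}\tilde{T}\wedge\pi^*\hat{\Omega}=\int_{\tilde{X}}d(\tilde{T}\wedge\Gamma)=0. $$

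Finally I would extract the contradiction from positivity. Here lies the one genuinely new point compared with the $(1,1)$/$(n-1,n-1)$ situation: $\Omega$ is only weakly strictly positive, so the measure $\tilde{T}\wedge\pi^*\Omega$ is non-negative precisely because a positive current pairs non-negatively with a weakly positive form under the strong/weak positivity duality (for $p=1$ or $p=n-1$ the two notions coincide and this is automatic). The transversality (strict weak positivity) of $\Omega$ then upgrades this to strict positivity of the measure wherever $\tilde{T}\neq 0$, so that a nonzero $\tilde{T}$ would give $\int_{\tilde{X}}\tilde{T}\wedge\pi^*\Omega>0$, contradicting the vanishing above; hence $\tilde{T}\equiv 0$.

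I expect two steps to require the most care. The analytic one --- that the integral over the non-compact $\tilde{X}$ of the exact $L^1$ top-form $d(\tilde{T}\wedge\Gamma)$ vanishes --- is inherited verbatim from the sG proof and rests on the Stokes/exhaustion argument valid for $\tilde{d}$(bounded) potentials. The conceptual one, and the main obstacle, is making the positivity pairing rigorous: I must fix the convention so that ``$\tilde{T}\ge 0$'' pairs non-negatively with the merely weakly positive $\Omega$ (equivalently, take $\tilde{T}$ to be a strongly positive current), and then invoke the transversality of $\Omega$ to pass from non-negativity of the measure to its strict positivity off the zero current.
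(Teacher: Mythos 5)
Your proposal follows essentially the same route as the paper's own proof: pull back $\hat{\Omega}$ to the universal cover, write it as $d\Gamma$ with $\Gamma$ bounded, wedge with $\tilde{T}$, apply Stokes to the $L^1$ exact top-degree current to get $\int_{\tilde{X}}\tilde{T}\wedge\pi^*\hat{\Omega}=0$, and contradict this with the weak strict positivity of the $(p,p)$-component. Your additional care about the strong/weak positivity duality in the pairing $\tilde{T}\wedge\Omega$ is a point the paper passes over silently, but the argument is the same.
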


\begin{proof}
	Let $\hat\Omega$ be a \( d \)-closed \( 2p \)-form as in the definition of weakly p-Kähler hyperbolic manifold. There exists a \( L^\infty_{\pi^*\omega} \)-form \( \Gamma \) of degree \( (2p-1) \) on \( \tilde{X} \) such that \( \pi^*\hat\Omega = d\Gamma \).
	
	For a \( d \)-closed \( (n-p,n-p) \)-current \( \tilde{T}\) of \( L^1_{\pi^*\omega} \) on \( \tilde{X} \), \( \tilde{T} \wedge \Gamma \) is again  of \( L^1_{\pi^*\omega} \). Hence also \( d(\tilde{T} \wedge \Gamma) \). Now we have:
	
	\[
	\int_{\tilde{X}} \tilde{T} \wedge \pi^*\hat\Omega = \int_{\tilde{X}} d(\tilde{T} \wedge \Gamma) = 0.
	\]
	
	However, $(p,p)$-component of $\pi^*\hat\Omega$  is weakly strictly positive. Therefore, there exists no non-zero $d$-closed positive $(n-p,n-p)$-current $\tilde{T}\ge 0$
	on $\tilde{X}$ such that $\tilde{T}$ is of $L^1_{\pi^*\omega}$.
	
\end{proof}

Recall the definition of pluriclosed star split metric:

\begin{definition}\cite{popovici2023pluriclosed}
	Let  $X$  be a complex manifold with  $dim_{\cc} X=n$ and  $\omega$ a Hermitian  metric   on  $X$. Let $\star$ be the Hodge star operator induced by $\omega$ and $\rho_{\omega}$ the unique $(1,1)$-form such that $i\ddb\omega_{n-2}=\omega_{n- 2}\wedge\rho_{\omega}$. The metric $\omega$ is said to be pluriclosed star split if $\ddb(\star\rho_{\omega})=0$.
\end{definition}


\begin{definition}
	Let  $X$  be a compact complex manifold with  $dim_{\cc} X=n$. A metric  $\omega$  on  $X$  is said to be {\bf pluriclosed star split hyperbolic } if  $\omega$  is pluriclosed star split  $\pi^{*}\left(\star\rho_{\omega}\right)=\partial \bar{\Gamma}+\bar{\partial} \Gamma$  on  $\tilde{X}$  with  $\Gamma$ $\tilde{\omega}$-bounded.
\end{definition}

\begin{theorem}
	Let  $X$ be a compact complex  manifold of dimension  $n$. If $X$  is pluriclosed star split hyperbolic, then  $X$  is divisorially hyperbolic.
\end{theorem}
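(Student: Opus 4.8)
The plan is to reduce the statement to the isoperimetric argument already carried out for sG-hyperbolic manifolds, with the strictly positive $(n-1,n-1)$-form $\star\rho_\omega$ taking over the role played there by $\omega_{n-1}$. Indeed, the pluriclosed star split hyperbolic hypothesis furnishes a $\pi^{*}\omega$-bounded $(n-1,n-2)$-form $\Gamma$ on $\tilde X$ with $\pi^{*}(\star\rho_\omega)=\dd\bar\Gamma+\db\Gamma$, which is structurally the same datum as an sG-hyperbolic metric (a bounded primitive on the universal cover for the relevant $(n-1,n-1)$-form), except that the distinguished form is now $\star\rho_\omega$ rather than $\omega_{n-1}$. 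So I would assume, for contradiction, that there is a holomorphic $f:\cc^{n-1}\to X$ non-degenerate at some point and of subexponential growth, lift it to $\tilde f:\cc^{n-1}\to\tilde X$ with $\pi\circ\tilde f=f$ (possible since $\cc^{n-1}$ is simply connected), and derive a contradiction exactly as before.

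The two points that need checking beyond a verbatim transcription are the following. First, the bounded primitive transports correctly: on $\cc^{n-1}$ the pieces $\dd\bar\Gamma$ and $\db\Gamma$ pull back to $d\,\tilde f^{*}\bar\Gamma$ and $d\,\tilde f^{*}\Gamma$ respectively (the complementary bidegree components vanish for dimension reasons), so $f^{*}(\star\rho_\omega)=\tilde f^{*}\pi^{*}(\star\rho_\omega)=d\,\tilde f^{*}(\Gamma+\bar\Gamma)$ with $\tilde f^{*}(\Gamma+\bar\Gamma)$ an $f^{*}\omega$-bounded $(2n-3)$-form, by the same pointwise estimate used in the sG-hyperbolic proof; Stokes' theorem then gives
\[
\int_{B_r}f^{*}(\star\rho_\omega)=\int_{S_r}\tilde f^{*}(\Gamma+\bar\Gamma)\le C\int_{S_r}d\sigma_{\omega,f,r}.
\]
Second, $\star\rho_\omega$ must be usable as a volume form. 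The Weil identity for the $(1,1)$-form $\rho_\omega$, together with the defining relation $\rho_\omega\wedge\omega_{n-2}=i\ddb\omega_{n-2}$, gives
\[
\star\rho_\omega=(\operatorname{tr}_\omega\rho_\omega)\,\omega_{n-1}-i\ddb\omega_{n-2},
\]
exhibiting $\star\rho_\omega$ as a real $(n-1,n-1)$-form; I use that it is moreover strictly positive, so that on the compact $X$ it is comparable to $\omega_{n-1}$, say $\tfrac{1}{C}\omega_{n-1}\le\star\rho_\omega\le C\omega_{n-1}$. By the comparability remark following Definition \ref{defp}, all the quantities in Definition \ref{div} may then be computed with $\star\rho_\omega$ in place of $\omega_{n-1}$ up to harmless constants, and non-degeneracy of $f$ forces $\int_{B_r}f^{*}(\star\rho_\omega)>0$ for large $r$. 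With these two points in hand, the Hölder inequality, the relation $\tfrac{1}{2r}\,d\mu_{\omega,f,r}=\tfrac{1}{|d\tau|_{f^{*}\omega}}\,d\sigma_{\omega,f,r}$, and integration in $r$ reproduce the differential inequality $F'(r)\ge\tfrac{2}{C_{1}C^{2}}F(r)$ verbatim, so $\log F$ grows at least linearly; since $F(r)>0$ for large $r$, this contradicts condition (ii) of Definition \ref{div}.

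The step I expect to be the real obstacle is the strict positivity of $\star\rho_\omega$ (equivalently $\rho_\omega>0$): it is what makes the integrals against $\star\rho_\omega$ positive and comparable to the $\omega$-volume, and hence what turns the bounded-exactness hypothesis into genuine hyperbolicity. That some positivity must be built into the hypothesis is visible already for K\"ahler metrics, where $\ddb\omega_{n-2}=0$ forces $\rho_\omega=0$ and $\star\rho_\omega$ degenerates, so that $\pi^{*}(\star\rho_\omega)=0$ is trivially bounded-exact while $X$ need not be divisorially hyperbolic. Thus the positivity of $\rho_\omega$ should be read into the pluriclosed star split hyperbolic structure (in parallel with the weak strict positivity required of weakly $p$-K\"ahler structures); once it is granted, every remaining step is identical to the arguments already given.
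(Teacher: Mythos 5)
Your overall scaffolding --- lift to the universal cover, pull back a bounded primitive, Stokes, H\"older, and the differential inequality for $\log F$ --- is the same as the paper's, but you diverge at the one point that matters, and the divergence is a genuine gap. You run the isoperimetric argument with $\star\rho_\omega$ in place of $\omega_{n-1}$, which forces you to assume $\rho_\omega>0$ so that $\star\rho_\omega$ is comparable to $\omega_{n-1}$ and $\int_{B_r}f^*(\star\rho_\omega)$ controls $\operatorname{Vol}_{\omega,f}(B_r)$. As you yourself concede, that positivity appears nowhere in the definition of a pluriclosed star split hyperbolic metric, so what you prove is the theorem under an added hypothesis, not the theorem as stated.

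The paper's proof keeps $\operatorname{Vol}_{\omega,f}(B_r)=\int_{B_r}f^*\omega_{n-1}$ as the quantity to be bounded and uses the identity $\star\rho_\omega=g\,\omega_{n-1}-i\ddb\omega_{n-2}$ (essentially your Weil-identity computation, with $g$ a real smooth function) in the opposite direction, writing $\omega_{n-1}=\tfrac1g\bigl(\star\rho_\omega+i\ddb\omega_{n-2}\bigr)$. The summand $i\ddb\omega_{n-2}=d(i\db\omega_{n-2})$ is globally $d$-exact on the compact $X$ with bounded potential, and $\pi^*(\star\rho_\omega)=\dd\bar\Gamma+\db\Gamma$ has a bounded potential by hypothesis; pulling back to $\cc^{n-1}$ kills the extreme-bidegree components, so $f^*\omega_{n-1}$ itself is $d$ of an $f^*\omega$-bounded form and the argument proceeds with no positivity of $\rho_\omega$. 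That is the ingredient missing from your proposal. You should be aware, though, that the paper's route carries its own unstated requirements: it divides by $g$, so needs $g$ nowhere vanishing (for a K\"ahler metric $\rho_\omega=0$ and $g\equiv0$, which is exactly the degenerate case you flagged), and applying $d$ to $\Gamma/g$ produces $dg/g^2$ terms that the paper does not address. Your diagnosis of where the difficulty sits is therefore sound; the fix the paper intends is the decomposition of $\omega_{n-1}$ via $g$, not a positivity assumption on $\rho_\omega$.
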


\begin{proof}
	Suppose there exists a holomorphic map $f:\cc^{n-1}\rightarrow X$ non-degenerate at some point and has subexponential growth.

	Let $\pi: \tilde{X} \longrightarrow X$ be the universal cover of $X$. There exists a  $\pi^{*}\omega$-bounded  $(2n-3)$-form  $\Gamma$  on  $\tilde{X}$, such that  $\pi^{*}\left(\omega_{n-1}\right)=\partial \bar{\Gamma}+\bar{\partial} \Gamma$.
	
	Then $\tilde{f}^{*} \Gamma$ is $f^*\omega$-bounded:
	\begin{align*}
		\left|\tilde{f}^{*} \Gamma\left(v_{1}, \cdots v_{2 n-3}\right)\right| & =\left|\Gamma\left(f_{*} v_{1}, \cdots, f_{*} v_{2 n-3}\right)\right| \\
		& \leqslant C\left|\tilde{f}_{*} v_{1}\right|_{\pi^{*} \omega} \cdots\left|\tilde f_{*} v_{2 n-3}\right|_{\pi^{*} \omega} \\
		& =C\left|v_{1}\right|_{f^{*} \omega} \cdots\left|v_{2 n-3}\right|_{f^{*} \omega}
	\end{align*}
	for any tangent vectors $v_{1}, \cdots, v_{2 n-3}$ in $\mathbb{C}^{n-1}$.
	
	Because of the compactness of $X$, $\frac{i\bar\partial \omega_{n- 2}}{g}$ is bounded. According to \cite{popovici2023pluriclosed}, we have $\star\rho_{\omega}=g\omega_{n-1}-i\partial \bar{\partial} \omega_{n-2}$ for some real-valued $\cinf$ function on $X$. Therefore, we have
	$$
	\begin{aligned}
		\text { Vol }_{\omega, f}\left(B_{r}\right) & =\int_{B_{r}} f^{*}\left(\omega_{n-1}\right) \\
		&=(n-1) \int_{B_{r}} f^{*}\left(\frac{1}{g} \star\rho_{\omega}+\frac{i}{g} \partial \bar{\partial} \omega_{n-2}\right) \\
		&=(n-1) \int_{B_{r}} d\left({\tilde{f}}^{*}\left(\frac{\Gamma+\bar{\Gamma}}{g}\right)+f^*\left(\frac{i\bar\partial \omega_{n- 2}}{g}\right)\right) \\&\leqslant C \int_{S_{r}} d \sigma_{\omega, f, r},
	\end{aligned}$$

	where  $d \sigma_{\omega, f,r}=\left.{ }({\star}_{f^{*} \omega}\left(\frac{d \tau}{|d \tau|_{f^{*} \omega}{ }}\right)\right)|_{s_{r}}$.
	
	By H\"older inequality, we have
	
	$$\int_{s_{r}} \frac{1}{|d \tau|_{f^{*} \omega}} d \sigma_{\omega, f, r} \cdot \int_{S_{r}} |d \tau|_{f^{*} w} d \sigma_{\omega, f, r} \geqslant\left(\int_{S_{r}} d \sigma_{\omega, f,r}\right)^{2}.$$
	
	We have  $d \tau=2 r d r$ . Let  $d \mu_{\omega, f, r}$  be the measure on  $S_{r}$  such that
	
	$$d \mu_{\omega, f, r} \wedge \frac{(d \tau)|_{S_r}}{2 r}=\left.\left(f^{*} \omega_{n-1}\right)\right|_{S_r} \text {. }$$
	
	Hence, we have  $$\frac{1}{2 r} d \mu_{\omega,f , r}=\frac{1}{|d \tau|_{f^{*}\omega} } d \sigma_{\omega, f, r}.$$
	
	Then we get:
	$$\begin{array}{l}
		\text {Vol}_{\omega, f}\left(B_{r}\right)=\int_{0}^{r}\left(\int_{S_t} \frac{1}{| d \tau |_{f^*\omega} } d \sigma_{\omega, f, t}\right) d \tau \\
		\geqslant \int_{0}^{r} \frac{\left(\int_{S_{r}} d \sigma_{\omega, f,r}\right)^{2}}{\int_{S_t}\left|d \tau\right|_{f^{*} \omega} d \sigma_{\omega, f, t}} 2 t d t \\
		
		\geqslant \frac{2}{C^{2}} \int_{0}^{r} \frac{\left(\text {Vol}_{\omega, f}\left(B_{t}\right)\right)^{2}}{\int_{S_{t}}|d \tau|_{f^{*} \omega} d{\sigma_{\omega, t}}}td t \\
		\geqslant \frac{2}{C_{1} C^{2}} \int_{0}^{r} \text {Vol}_{\omega, f}\left(B_{t}\right) d t
	\end{array}$$
	for $r$ big enough.
	
	This is to say
	$$
		F^{\prime}(r) \geqslant \frac{2}{C_{1} C^{2}} F(r) $$ 
		for $r$ big enough.
		
	Hence we get$$	(\log F(r))^{\prime} \geqslant \frac{2}{C_{1} C^{2}}.$$
	Finally, we get	$$\log F\left(r_{1}\right)-\frac{2}{C_{1} C^{2}} r_{1}+\frac{2}{C_{1} C^{2}} r_{2} \geqslant \log F\left(r_{2}\right).
	$$
	By (ii) of Definition \ref{div}, we have $F(r_2)=0$ for $r_2$ big enough, which contradicts our assumption.
\end{proof}
\bibliographystyle{alpha}
\bibliography{bib,cit}	

\begin{thebibliography}{MP22b}

\bibitem[AB90]{alessandrini1990small}
Lucia Alessandrini and Giovanni Bassanelli.
\newblock Small deformations of a class of compact non-{K}{\"a}hler manifolds.
\newblock {\em Proceedings of the American Mathematical Society},
  109(4):1059--1062, 1990.

\bibitem[Ale18]{alessandrini2018forms}
Lucia Alessandrini.
\newblock Forms and currents defining generalized p-{K}{\"a}hler structures.
\newblock In {\em Abhandlungen aus dem Mathematischen Seminar der
  Universit{\"a}t Hamburg}, volume~88, pages 217--245. Springer, 2018.

\bibitem[Bro78]{brody1978compact}
Robert Brody.
\newblock Compact manifolds and hyperbolicity.
\newblock {\em Transactions of the American Mathematical Society},
  235:213--219, 1978.

\bibitem[Gro91]{gromov1991kahler}
Mikhail Gromov.
\newblock K{\"a}hler hyperbolicity and {$ L\_2 $}-hodge theory.
\newblock {\em Journal of differential geometry}, 33(1):263--292, 1991.

\bibitem[Kob67]{kobayashi1967invariant}
Shoshichi Kobayashi.
\newblock Invariant distances on complex manifolds and holomorphic mappings.
\newblock {\em Journal of the Mathematical Society of Japan}, 19(4):460--480,
  1967.

\bibitem[KS60]{5a8ecba0-bf3c-3a04-94d0-f3fe0a96f498}
K.~Kodaira and D.~C. Spencer.
\newblock On deformations of complex analytic structures, iii. stability
  theorems for complex structures.
\newblock {\em Annals of Mathematics}, 71(1):43--76, 1960.

\bibitem[Mic82]{10.1007/BF02392356}
M.~L. Michelsohn.
\newblock {On the existence of special metrics in complex geometry}.
\newblock {\em Acta Mathematica}, 149(none):261 -- 295, 1982.

\bibitem[MP22a]{dan21}
Samir Marouani and Dan Popovici.
\newblock Balanced hyperbolic and divisorially hyperbolic compact complex
  manifolds, 2022.
\newblock arXiv e-print math.CV/2107.08972v1, To appear in Mathematical
  Research Letters.

\bibitem[MP22b]{dan22}
Samir Marouani and Dan Popovici.
\newblock Some properties of balanced hyperbolic compact complex manifolds.
\newblock {\em International Journal of Mathematics}, 33:39, 02 2022.

\bibitem[Pop13]{popovici2013deformation}
Dan Popovici.
\newblock Deformation limits of projective manifolds: Hodge numbers and
  strongly gauduchon metrics.
\newblock {\em Inventiones mathematicae}, 194:515--534, 2013.

\bibitem[Pop15]{popovici2013aeppli}
Dan Popovici.
\newblock Aeppli cohomology classes associated with gauduchon metrics on
  compact complex manifolds.
\newblock {\em Bulletin de la Société Mathématique de France},
  143(4):763--800, 2015.

\bibitem[Pop23]{popovici2023pluriclosed}
Dan Popovici.
\newblock Pluriclosed star split hermitian metrics.
\newblock {\em Mathematische Zeitschrift}, 305(1):7, 2023.

\bibitem[Raj94]{rajan1994deformations}
CS~Rajan.
\newblock Deformations of complex structures on {$\Gamma\textbackslash SL_2
  (\mathbb{C})$}.
\newblock In {\em Proceedings of the Indian Academy of Sciences-Mathematical
  Sciences}, volume 104, pages 389--395. Springer, 1994.

\bibitem[Sch07]{schweitzer2007autour}
Michel Schweitzer.
\newblock Autour de la cohomologie de {Bott-Chern}.
\newblock {\em arXiv preprint arXiv:0709.3528}, 2007.

\bibitem[Sul76]{sullivan1976cycles}
Dennis Sullivan.
\newblock Cycles for the dynamical study of foliated manifolds and complex
  manifolds.
\newblock {\em Inventiones mathematicae}, 36(1):225--255, 1976.

\bibitem[Yac98]{yachou1998varietes}
Abderrahim Yachou.
\newblock {\em Sur les vari{\'e}t{\'e}s semi-k{\"a}hl{\'e}riennes}.
\newblock PhD thesis, Universit{\'e} Lille 1, 1998.

\end{thebibliography}
	\vspace*{2em}
	Institut de Math\'ematiques de Toulouse,\\
	Universit\'e Paul Sabatier,\\
	118 route de Narbonne, 31062 Toulouse, France\\
	Email: yi.ma@math.univ-toulouse.fr
\end{document}